\newtheorem{definition}{\bf Definition}[section]
\newtheorem{Lem}[definition]{\bf Lemma}
\newtheorem{Thm}[definition]{\bf Theorem}
\newtheorem{Cor}[definition]{\bf Corollary}
\newtheorem{Rem}[definition]{\bf Remark}
\title{On global offensive $k$-alliances in graphs}
\author{Sergio Bermudo$^{1,}$\footnote{e-mail:\mbox{\tt
    sbernav\@@upo.es.} Partially supported by Ministerio de Ciencia y
Tecnolog\'ia, ref. BFM2003-00034 and Junta de Andaluc\'ia, ref.
FQM-260 and ref. P06-FQM-02225.},  Juan A.
Rodr\'{\i}guez-Vel\'{a}zquez$^{2,}$\footnote{e-mail:\mbox{\tt
juanalberto.rodriguez\@@urv.cat}. Partially supported by the Spanish
Ministry of Education through projects TSI2007-65406-C03-01
``E-AEGIS" and CONSOLIDER CSD2007-00004 ``ARES" and by the Rovira i
Virgili
 University through project 2006AIRE-09
} \\ Jos\'{e} M. Sigarreta$^{3,}$\footnote{e-mail:\mbox{\tt
    josemaria.sigarreta\@@uc3m.es}} and Ismael G.
Yero$^{2,}$\footnote{e-mail: ismael.gonzalez\@@urv.cat} \\
    \\
$^1${\small Department of Economy, Quantitative Methods and Economic
History} \\ {\small Pablo de Olavide University,  Carretera de
Utrera Km. 1, 41013-Sevilla, Spain}
\\
\\
$^2${\small Department of Computer Engineering and Mathematics
}\\
{\small  Rovira i Virgili University, Av. Pa\"{\i}sos Catalans 26,
43007 Tarragona, Spain.}
\\
\\
$^3$ {\small Faculty of Mathematics,  Autonomous University of
Guerrero}
\\
{\small Carlos E. Adame 5, Col. La Garita, Acapulco, Guerrero,
M\'{e}xico}}
\date{}
\begin{document}

\maketitle

\begin{abstract}

%The problem of finding minimal (global) offensive $k$-alliances is
%NP-complete  [{\it Discrete Applied Mathematics}
%doi:10.1016/j.dam.2008.06.001].
We investigate the relationship between global offensive
$k$-alliances and some characteristic sets of a graph including
$r$-dependent sets, $\tau$-dominating sets and standard dominating
sets. In addition, we discuss the close relationship that exist
among the (global) offensive
  $k_i$-alliance number of $\Gamma_i$, $i\in \{1,2\}$ and  the (global) offensive
   $k$-alliance number of $\Gamma_1 \times \Gamma_2$,
  for some specific values of $k$.
As a consequence of the study, we obtain bounds on the global
offensive $k$-alliance number in terms of several parameters of the
graph.
\end{abstract}

{\it Keywords:} Offensive alliances, dominating sets, dependent
sets.

{\it AMS Subject Classification numbers:}   05C69

\section{Introduction}

The mathematical properties of alliances in graphs were first
studied by P. Kristiansen, S. M. Hedetniemi and S. T. Hedetniemi
\cite{KrHeHe}. They proposed different types of alliances that have
been extensively studied during the last four years. These types of
alliances are called \emph{defensive alliances}
\cite{KrHeHe,RoGoSi,RoSi6,SiRo1}, \emph{offensive alliances}
\cite{FaFrGoHeHeKrLaSk,RoSi3,SiRo2} and \emph{dual alliances} or
\emph{powerful alliances} \cite{BrDuHaHe,RoFeSi}. A generalization
of these alliances called $k$-alliances was presented by K. H.
Shafique and R. D. Dutton \cite{ShDu2,ShDu3}. We are interested in
the study of the mathematical properties of global offensive
$k$-alliances.

%The reader is referent to \cite{FaFrGoHeHeKrLaSk,RoSi3,RoSi4,SiRo2}
%for specific results on (global) offensive alliances.

 We begin by stating the terminology
used. Throughout this article, $\Gamma = (V,E)$ denotes a simple
graph of order $|V | = n$. We denote two adjacent vertices $u$ and
$v$ by $u \sim v$. For a nonempty set $S \subseteq V$, and a vertex
$v \in V$, $N_S(v)$ denotes the set of neighbors $v$ has in $S$:
$N_S(v) := \{u \in S : u \sim v\}$, and the degree of $v$ in $S$
will be denoted by $\delta_S(v) = |N_S(v)|$. We denote the degree of
a vertex $v \in V$ by $\delta(v)$, the minimum degree of $\Gamma$ by
$\delta$ and the maximum degree by $\Delta$. The complement of the
set $S$ in $V$ is denoted by $\overline{S}$ and the boundary of $S$
is defined by $\partial(S) := \bigcup_{v\in S} N_{\overline{S}}(v)$.

A set $S \subseteq V$ is a \textit{dominating set} in $\Gamma$ if
for every vertex $v\in \overline{S}$,  $\delta_S(v)> 0$ (every
vertex in $\overline{S}$ is adjacent to at least one vertex in $S$).
The \textit{domination number} of $\Gamma$, denoted by
$\gamma(\Gamma)$, is the minimum cardinality of a dominating set in
$\Gamma$. For $k\in\{2-\Delta, . . . , \Delta\}$, a nonempty set $S
\subseteq V$ is an \textit{offensive $k$-alliance} in $\Gamma$ if
\begin{equation}\label{alliaceCondition}\delta_S(v)\geq
\delta_{\overline{S}}(v)+k, \quad \forall v \in \partial
(S)\end{equation} or, equivalently, \begin{equation}\delta(v)\geq
2\delta_{\overline{S}}(v)+k,\quad \forall v \in \partial (S)
.\end{equation} It is clear that if $k>\Delta$, no set $S$ satisfies
(\ref{alliaceCondition}) and, if $k<2-\Delta$, all the subsets of
$V$ satisfy it. An offensive $k$-alliance $S$ is called
\emph{global} if it is a dominating set. The \textit{offensive
$k$-alliance number} of $\Gamma$, denoted by $a_k^o(\Gamma)$, is
defined as the minimum cardinality of an offensive $k$-alliance in
$\Gamma$. The \textit{global offensive $k$-alliance number} of
$\Gamma$, denoted by $\gamma_k^o(\Gamma)$, is defined as the minimum
cardinality of a global offensive $k$-alliance in $\Gamma$. Notice
that $\gamma_k^o(\Gamma)\geq a_k^o(\Gamma)$ and
$\gamma_{k+1}^o(\Gamma)\geq \gamma_k^o(\Gamma)\ge \gamma(\Gamma)$.

In addition, if every vertex of $\Gamma$ has even degree and $k$ is
odd, $k = 2l-1$, then every global offensive $(2l-1)$-alliance in
$\Gamma$ is a global offensive $(2l)$-alliance. Hence, in such a
case, $\gamma_{2l-1}^o(\Gamma) = \gamma_{2l}^o(\Gamma)$.
Analogously, if every vertex of $\Gamma$ has odd degree and $k$ is
even, $k = 2l$, then every global offensive $(2l)$-alliance in
$\Gamma$ is a global offensive $(2l + 1)$-alliance. Hence, in such a
case, $\gamma_{2l}^o(\Gamma) = \gamma_{2l+1}^o(\Gamma)$.

\section{The global offensive $k$-alliance number for some families of graphs}

The problem of finding the global offensive $k$-alliance number is
NP-complete \cite{FeRoSi}. Even so, for some graphs  it is possible
to obtain this number. For instance, it is satisfied that for the
family of the complete graphs,  $K_n$, of order $n$
$$\gamma_k^o(K_n)=\left\lceil\frac{n+k-1}{2}\right\rceil ,$$
 for any cycle, $C_n$, of order $n$
$$
\gamma_k^o(C_n)=\left\{\begin{array}{c}
                         \vspace*{0.3cm}\left\lceil\frac{n}{3}\right\rceil, \quad \mbox{for $k=0,$}\,\,\,\,\\
                         \,\,\left\lceil\frac{n}{2}\right\rceil, \,\,\,\,\,\, \mbox{for $k=1,2$,}\\
                       \end{array}\right.
$$
and for any path, $P_n$, of order $n$
$$ \gamma_k^o(P_n)=\left\{\begin{array}{c}
                          \vspace*{0.3cm}\left\lceil\frac{n}{3}\right\rceil, \,\,\,\,\,\,\,\,\,\,\,\,\,\,\quad \mbox{for $k=0,$}\\
                         \left\lfloor\frac{n}{2}\right\rfloor+k-1, \quad \mbox{for $k=1,2.$}\\
                       \end{array}\right.
$$

%For bipartite graphs we have obtained the following result.

\begin{Rem} Let $\Gamma=K_{r,t}$ be a complete bipartite graph with
$t\leq r$. For every $k\in\{2-r,...,r\}$,

\begin{itemize}

\item[\emph{(a)}] if $k\geq t+1$, then $\gamma_k^o(\Gamma)=r$.

%\vspace{0.1cm}

\item[\emph{(b)}] if $k\leq t$ and $\left\lceil\frac{r+k}{2}\right\rceil+\left\lceil\frac{t+k}{2}\right\rceil\geq
t$, then $\gamma_k^o(\Gamma)=t$,

%\vspace{0.2cm}

\item[\emph{(c)}] if $-t<k\le t$ and $\left\lceil\frac{r+k}{2}\right\rceil+\left\lceil\frac{t+k}{2}\right\rceil< t$,
then
$\gamma_k^o(\Gamma)=\left\lceil\frac{r+k}{2}\right\rceil+\left\lceil\frac{t+k}{2}\right\rceil$,

\item[\emph{(d)}] if $k\leq -t$ and $\left\lceil\frac{r+k}{2}\right\rceil+\left\lceil\frac{t+k}{2}\right\rceil< t$,
then
$\gamma_k^o(\Gamma)=\min\{t,1+\left\lceil\frac{r+k}{2}\right\rceil\}$.
\end{itemize}
\end{Rem}

\begin{proof}  (a) Let $\{V_t,V_r\}$ be the bi-partition of the vertex set of
$\Gamma$. Since $V_r$ is a global offensive $k$-alliance, we only
need to show that for every  global offensive $k$-alliance $S$,
$V_r\subseteq S$. If $v\in \overline{S}$ it satisfies
$\delta_S(v)\geq \delta_{\overline{S}}(v)+k>t$, in consequence $v\in
V_t$. Therefore, $\overline{S}\subseteq V_t$ or, equivalently,
$V_r\subseteq S$. Thus, we conclude that $\gamma_k^o(\Gamma)=r$.

(b)  If $k\le t$, it is clear that $V_t$ is a global offensive
$k$-alliance, then $\gamma_k^o(\Gamma)\le t$. We suppose that
$\left\lceil\frac{r+k}{2}\right\rceil+\left\lceil\frac{t+k}{2}\right\rceil\geq
t$ and there exists a global offensive $k$-alliance   $S=A\cup B$
such that $A\subseteq V_r$, $B\subseteq V_t$ and $|S|<t$.  In such a
case, as $S$ is a dominating set, $B\neq \emptyset$. Since $S$ is a
global offensive $k$-alliance, $ 2|B|\geq t+k $ and $ 2|A|\geq r+k.
$ Then we have,
$t>|S|\ge\left\lceil\frac{r+k}{2}\right\rceil+\left\lceil\frac{t+k}{2}\right\rceil\geq
t$, a contradiction.
 Therefore, $\gamma_k^o(\Gamma)=t$.

(c) In the proof of (b) we have shown that if there exists a global
offensive $k$-alliance $S$ of cardinality $|S|<t$, then $|S|\ge
\left\lceil\frac{r+k}{2}\right\rceil+\left\lceil\frac{t+k}{2}\right\rceil$.
Taking $A\subset V_r$ of cardinality
$\left\lceil\frac{r+k}{2}\right\rceil$ and $B\subset V_t$ of
cardinality $\left\lceil\frac{t+k}{2}\right\rceil$ we obtain a
global offensive $k$-alliance $S=A\cup B$ of cardinality
$|S|=\left\lceil\frac{r+k}{2}\right\rceil+\left\lceil\frac{t+k}{2}\right\rceil
$.

(d) Finally, if $S=A\cup B$ where $A\subseteq V_r$, $B\subseteq
V_t$, $|A|=\left\lceil\frac{r+k}{2}\right\rceil$ and $|B|=1$, then
$S$ is a global offensive $k$-alliance. Moreover, $S$ is a minimum
global offensive $k$-alliance if and only if
$|S|=1+\left\lceil\frac{r+k}{2}\right\rceil\leq t$.
\end{proof}

\section{Global offensive $k$-alliances and $r$-dependent sets}

A set $S \subseteq V$ is an \textit{$r$-dependent set} in $\Gamma$
if the maximum degree of a vertex in the subgraph $\langle S\rangle$
induced by $S$ is at most $r$, i.e., $\delta_S(v)\le r, \quad
\forall v \in S$. We denote by $\alpha_r(\Gamma)$ the maximum
cardinality of an $r$-dependent set in $\Gamma$ \cite{FaHeHe}.

\begin{Thm} Let $\Gamma$ be a graph of order $n$, minimum degree
$\delta$ and maximum degree $\Delta$.

\vspace{-0.2cm}

\begin{itemize}

\item[\emph{(a)}] If $S$ is an $r$-dependent set in $\Gamma$, $r\in\left\{0,...,\lfloor\frac{\delta-1}{2}\rfloor\right\}$, then $\overline{S}$ is a
 global offensive
$(\delta-2r)$-alliance.

\item[\emph{(b)}] If $S$ is a global offensive $k$-alliance in $\Gamma$, $k\in\{2-\Delta,...,\Delta\}$, then
$\overline{S}$ is a
$\left\lfloor\frac{\Delta-k}{2}\right\rfloor$-dependent set.

\vspace{-0.2cm}

\item[\emph{(c)}] Let  $\Gamma$ be a $\delta$-regular graph $(\delta>0)$.  $S$ is an $r$-dependent set in $\Gamma$, $r\in\left\{0,...,\lfloor\frac{\delta-1}{2}\rfloor\right\}$,
if and only if $\overline{S}$ is a global offensive
$(\delta-2r)$-alliance.

\end{itemize}

\end{Thm}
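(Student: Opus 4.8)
The plan is to prove the three items by directly translating the defining inequality of an $r$-dependent set into the alliance condition and back, being careful about where each condition is required to hold (on $S$ versus on $\partial(S)$) and about the rounding introduced by the factor $2$.

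For item (a), let $S$ be an $r$-dependent set with $r\le\lfloor\frac{\delta-1}{2}\rfloor$. First I would check that $\overline{S}$ is dominating: any $v\in S$ has $\delta_S(v)\le r<\delta\le\delta(v)$, so $v$ has a neighbour outside $S$, i.e. $\delta_{\overline S}(v)>0$; hence $\overline{S}$ dominates. Next, for the alliance condition we must look at $v\in\partial(\overline S)$, which is precisely the set of vertices of $S$ having a neighbour in $\overline S$. For such a $v$ we have $\delta_{\overline S}(v)=\delta(v)-\delta_S(v)\ge\delta-r$ and $\delta_S(v)\le r$, so
\[
\delta_{\overline S}(v)-\delta_S(v)\ \ge\ (\delta-r)-r\ =\ \delta-2r,
\]
which is exactly the offensive $(\delta-2r)$-alliance condition for the set $\overline S$. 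The constraint $r\le\lfloor\frac{\delta-1}{2}\rfloor$ guarantees $\delta-2r\ge 1\ge 2-\Delta$, so this is a legitimate value of the parameter. Note $\overline S$ is nonempty because $S$ cannot be all of $V$ (a vertex of minimum degree would violate $r$-dependence).

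For item (b), let $S$ be a global offensive $k$-alliance. I want to show every $v\in\overline S$ satisfies $\delta_{\overline S}(v)\le\lfloor\frac{\Delta-k}{2}\rfloor$. There are two cases. If $v\in\partial(S)$, the alliance condition gives $\delta_S(v)\ge\delta_{\overline S}(v)+k$, and since $\delta_S(v)+\delta_{\overline S}(v)=\delta(v)\le\Delta$ we get $2\delta_{\overline S}(v)\le\Delta-k$, hence $\delta_{\overline S}(v)\le\lfloor\frac{\Delta-k}{2}\rfloor$. If $v\in\overline S\setminus\partial(S)$, then $v$ has no neighbour in $S$, so since $S$ is dominating this cannot happen — every vertex of $\overline S$ lies in $\partial(S)$. (If one prefers to be safe, such a $v$ would have $\delta_{\overline S}(v)=\delta(v)$, which need not be bounded by $\lfloor\frac{\Delta-k}{2}\rfloor$, so invoking domination is essential here.) Thus $\overline S$ is a $\lfloor\frac{\Delta-k}{2}\rfloor$-dependent set.

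For item (c), in a $\delta$-regular graph with $\delta>0$ the forward direction is item (a) with equality $\delta_{\overline S}(v)=\delta-\delta_S(v)$ throughout. For the converse, suppose $\overline S$ is a global offensive $(\delta-2r)$-alliance; apply item (b) with $k=\delta-2r$ and with the roles reversed, noting that $\overline{\overline S}=S$ and that $\Delta=\delta$ here, so $\lfloor\frac{\Delta-k}{2}\rfloor=\lfloor\frac{\delta-(\delta-2r)}{2}\rfloor=r$; hence $S$ is $r$-dependent. The only point needing a line of care is that regularity collapses the floor to exactly $r$ with no loss, so the equivalence is genuine rather than one-directional. The main obstacle in the whole argument is not any computation but keeping straight that the alliance condition is imposed on the boundary of the complement, which for item (a) means on vertices of $S$, and remembering to use the domination hypothesis in (b) to ensure $\partial(S)=\overline S$; the arithmetic with the floors is routine.
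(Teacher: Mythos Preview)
Your proof is correct and follows essentially the same approach as the paper's: in each part you manipulate the identity $\delta(v)=\delta_S(v)+\delta_{\overline S}(v)$ together with the relevant defining inequality, exactly as the paper does. You are somewhat more explicit than the paper about verifying domination and nonemptiness of $\overline S$ in (a), and about invoking domination to ensure $\partial(S)=\overline S$ in (b), but these are points the paper leaves implicit rather than genuine differences in strategy.
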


\begin{proof}(a) Let $S$ be an $r$-dependent set in $\Gamma$, then $\delta_S(v)\leq r$
for every $v\in S$. Therefore, $\delta_S(v)+\delta\leq
2\delta_S(v)+\delta_{\overline{S}}(v)\leq
2r+\delta_{\overline{S}}(v)$. As a consequence,
$\delta_{\overline{S}}(v)\geq \delta_S(v)+\delta-2r$, for every
$v\in S$. That is, $\overline{S}$ is a global offensive
$(\delta-2r)$-alliance in $\Gamma$.

(b) If $S$ is a global offensive $k$-alliance in $\Gamma$, then
$\delta(v)\geq 2\delta_{\overline{S}}(v)+k$ for every $v\in
\overline{S}$. As a consequence,
$\delta_{\overline{S}}(v)\leq\frac{\delta(v)-k}{2}\leq\frac{\Delta-k}{2}$
for every $v\in \overline{S}$, that is, $\overline{S}$ is a
$\left\lfloor\frac{\Delta-k}{2}\right\rfloor$-dependent set in
$\Gamma$.

(c) The result follows immediately from (a) and (b).
\end{proof}

\begin{Cor} Let $\Gamma$ be a graph of order $n$, minimum degree
$\delta $ and maximum degree $\Delta$.
\begin{itemize}
\item For every $k\in\{2-\Delta,...,\Delta \}$,  $
n-\alpha_{\left\lfloor\frac{\Delta-k}{2}\right\rfloor}(\Gamma)\leq\gamma_k^o(\Gamma).$

\item For every $k\in\{1,...,\delta\}$,
$ \gamma_k^o(\Gamma)\leq
n-\alpha_{\left\lfloor\frac{\delta-k}{2}\right\rfloor}(\Gamma). $

\item If $\Gamma$ is a $\delta$-regular graph $(\delta >0)$, for every $k\in\{1,...,\delta\}$,
$\gamma_k^o(\Gamma)= n-\alpha_{
\left\lfloor\frac{\delta-k}{2}\right\rfloor }(\Gamma).$
\end{itemize}
\end{Cor}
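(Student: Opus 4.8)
The plan is to deduce all three items directly from the Theorem, using only the bookkeeping identities recorded in the introduction: for any $S\subseteq V$ one has $|\overline{S}|=n-|S|$; an offensive $k'$-alliance is automatically an offensive $k$-alliance whenever $k\le k'$ (since $\delta_S(v)\ge\delta_{\overline S}(v)+k'\ge\delta_{\overline S}(v)+k$); and by definition $\alpha_r(\Gamma)$ is the largest size of an $r$-dependent set while $\gamma_k^o(\Gamma)$ is the smallest size of a global offensive $k$-alliance.

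For the first (lower) bound I would take a minimum global offensive $k$-alliance $S$, so $|S|=\gamma_k^o(\Gamma)$. Part (b) of the Theorem says $\overline{S}$ is a $\lfloor(\Delta-k)/2\rfloor$-dependent set, hence $n-\gamma_k^o(\Gamma)=|\overline{S}|\le\alpha_{\lfloor(\Delta-k)/2\rfloor}(\Gamma)$, which rearranges to the claimed inequality.

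For the second (upper) bound, set $r:=\lfloor(\delta-k)/2\rfloor$. Since $k\in\{1,\dots,\delta\}$ we have $0\le r\le\lfloor(\delta-1)/2\rfloor$, so part (a) of the Theorem applies: choosing a maximum $r$-dependent set $S$, its complement $\overline{S}$ is a global offensive $(\delta-2r)$-alliance. Because $\delta-2r=\delta-2\lfloor(\delta-k)/2\rfloor\ge\delta-(\delta-k)=k$, the set $\overline{S}$ is in particular a global offensive $k$-alliance, and it is nonempty because $r<\delta\le\Delta$ forces $S\ne V$. Therefore $\gamma_k^o(\Gamma)\le|\overline{S}|=n-\alpha_{\lfloor(\delta-k)/2\rfloor}(\Gamma)$.

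Finally, for a $\delta$-regular graph with $\delta>0$ and $k\in\{1,\dots,\delta\}$ one has $\Delta=\delta$, so the index $\lfloor(\Delta-k)/2\rfloor$ of the first item coincides with the index $\lfloor(\delta-k)/2\rfloor$ of the second, and the two inequalities squeeze $\gamma_k^o(\Gamma)$ to the stated value; alternatively one invokes the equivalence in part (c) of the Theorem directly. The only genuinely delicate point is the floor-function bookkeeping in the middle step — verifying that $r$ lands in the admissible range of part (a) and that $\delta-2r\ge k$ so that the $(\delta-2r)$-alliance can be downgraded to a $k$-alliance — together with the minor observation that $\overline{S}\neq\emptyset$; everything else is a direct substitution.
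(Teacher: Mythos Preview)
Your proof is correct and follows exactly the route the paper intends: the corollary is stated without proof in the paper precisely because it is the immediate translation of parts (a), (b), (c) of the preceding Theorem into inequalities for $\alpha_r$ and $\gamma_k^o$, via the complement-cardinality identity $|\overline{S}|=n-|S|$. Your handling of the only nontrivial bookkeeping---checking that $r=\lfloor(\delta-k)/2\rfloor$ lies in the admissible range $\{0,\dots,\lfloor(\delta-1)/2\rfloor\}$ when $k\in\{1,\dots,\delta\}$, that $\delta-2r\ge k$ so the alliance can be downgraded, and that $\overline{S}\neq\emptyset$---is careful and correct.
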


\section{Global offensive $k$-alliances and $\tau$-dominating sets}

Let  $\Gamma$ be a graph without   isolated vertices. For a given
$\tau\in (0,1]$, a set $S\subseteq V$ is called
\emph{$\tau$-dominating set} in $\Gamma$ if $\delta_S(v)\geq\tau
\delta(v)$ for every $v\in \overline{S}$. We denote by
$\gamma_{\tau}(\Gamma)$ the minimum cardinality of a
$\tau$-dominating set in $\Gamma$ \cite{DuHoLaMa}.

\begin{Thm} \label{alfas} Let $\Gamma$ be a graph of minimum degree $\delta>0$ and maximum degree
$\Delta$.

\begin{itemize}

\item[\emph{(a)}] If $0 < \tau \le \min\{  \frac{k+\delta}{2\delta},\frac{k+\Delta}{2\Delta}\}$,
then every global offensive $k$-alliance in $\Gamma$ is a
$\tau$-dominating set.

\vspace{0.2cm}

\item[\emph{(b)}] If  $ \max\{ \frac{k+\delta}{2\delta},\frac{k+\Delta}{2\Delta}\}\le
\tau \le 1$, then every $\tau$-dominating set in $\Gamma$ is a
global offensive $k$-alliance.

\end{itemize}

\end{Thm}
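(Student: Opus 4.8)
The plan is to deduce both directions from a single elementary inequality satisfied by $\delta_S(v)$ at the vertices $v\in\overline{S}$, obtained by eliminating $\delta_{\overline{S}}(v)$ through the identity $\delta(v)=\delta_S(v)+\delta_{\overline{S}}(v)$.

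First I would record a common preliminary observation: a global offensive $k$-alliance is a dominating set by definition, and a $\tau$-dominating set $S$ is also dominating, since $\tau>0$ and $\delta(v)\ge\delta>0$ force $\delta_S(v)>0$ for every $v\in\overline{S}$. Hence in both parts $\partial(S)=\overline{S}$, so the offensive $k$-alliance condition and the $\tau$-domination condition both range over all of $\overline{S}$, and it remains only to compare them vertex by vertex.

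For part (a), let $S$ be a global offensive $k$-alliance. For $v\in\overline{S}=\partial(S)$ the inequality $\delta(v)\ge 2\delta_{\overline{S}}(v)+k$, combined with $\delta_{\overline{S}}(v)=\delta(v)-\delta_S(v)$, rearranges to $\delta_S(v)\ge\frac{\delta(v)+k}{2}=\delta(v)\left(\frac12+\frac{k}{2\delta(v)}\right)$. It then suffices to check that $\frac12+\frac{k}{2\delta(v)}\ge\tau$. Since the function $d\mapsto\frac12+\frac{k}{2d}$ is monotone on $[\delta,\Delta]$ (decreasing if $k>0$, increasing if $k<0$, constant if $k=0$), its value at $d=\delta(v)$ lies between its values at $\delta$ and $\Delta$, hence is at least $\min\left\{\frac{k+\delta}{2\delta},\frac{k+\Delta}{2\Delta}\right\}\ge\tau$; therefore $\delta_S(v)\ge\tau\,\delta(v)$ and $S$ is a $\tau$-dominating set. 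For part (b), let $S$ be a $\tau$-dominating set; by the preliminary it is dominating, so I only need the alliance inequality on $\overline{S}=\partial(S)$. From $\delta_S(v)\ge\tau\,\delta(v)$ I get $\delta_{\overline{S}}(v)=\delta(v)-\delta_S(v)\le(1-\tau)\delta(v)$, hence $2\delta_{\overline{S}}(v)+k\le 2(1-\tau)\delta(v)+k$, which is at most $\delta(v)$ precisely when $\tau\ge\frac12+\frac{k}{2\delta(v)}$; the same monotonicity argument gives $\frac12+\frac{k}{2\delta(v)}\le\max\left\{\frac{k+\delta}{2\delta},\frac{k+\Delta}{2\Delta}\right\}\le\tau$, so $S$ is a global offensive $k$-alliance.

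The computations are routine; the only step needing a little care is passing from the two-sided bound involving $\min$/$\max$ over the extreme degrees to the pointwise estimate $\frac12+\frac{k}{2\delta(v)}$, which rests on the monotonicity of $d\mapsto k/d$ and hence on distinguishing the cases $k>0$, $k<0$ and $k=0$. I do not expect any genuine obstacle beyond keeping track of which of $\delta$, $\Delta$ is the extremal degree in each case.
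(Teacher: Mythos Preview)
Your proof is correct and follows essentially the same approach as the paper: both reduce each direction to the pointwise comparison $\tfrac{\delta(v)+k}{2}\gtrless \tau\,\delta(v)$ and then handle the dependence on $\delta(v)\in[\delta,\Delta]$ by a case distinction. The only cosmetic difference is that the paper splits on whether $\tau\le\tfrac12$ or $\tau\ge\tfrac12$ (equivalently, on the sign of $2\tau-1$) and then uses $\delta$ or $\Delta$ accordingly, whereas you phrase the same step as monotonicity of $d\mapsto \tfrac12+\tfrac{k}{2d}$ and split on the sign of $k$; your packaging is a bit more uniform, but the underlying computation is identical.
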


\begin{proof} (a) If $S$ is a global offensive $k$-alliance in $\Gamma$, then
$2\delta_S(v)\geq \delta(v)+k$ for every $v\in \overline{S}$.
Therefore, if $0 < \tau \le \min\{\frac{1}{2},
\frac{k+\delta}{2\delta}\}$, then $ \delta_S(v)\geq
\frac{\delta(v)+k}{2}\geq
 \frac{\delta(v)+\delta(2\tau-1)}{2}\geq\tau\delta(v).
$ Moreover, if $ \frac{1}{2}\le \tau \le \frac{k+\Delta}{2\Delta}$,
then $ \delta_S(v)\geq \frac{\delta(v)+k}{2}\geq
\frac{\delta(v)+\Delta(2\tau-1)}{2}\geq\tau\delta(v). $

(b) Since $\delta>0$, it is clear that every $\tau$-dominating set
is a dominating set. If $\tau \ge \frac{1}{2}$, then $
\delta(2\tau-1)\le \delta(v)(2\tau-1)$, for every vertex $v$ in
$\Gamma$. Hence, if $S$ is a $\tau$-dominating set and
$\frac{k+\delta}{2\delta}\le \tau$, we have $k\le (2\tau-1)\delta(v)
\le 2\delta_S(v)-\delta(v)$, for every $v\in \bar{S}$. Thus, $S$ is
a global offensive $k$-alliance in $\Gamma$.

On the other hand, if $\tau \le \frac{1}{2}$, then $
\Delta(2\tau-1)\le \delta(v)(2\tau-1)$, for every vertex $v$ in
$\Gamma$. Hence, if $S$ is a $\tau$-dominating set and
$\frac{k+\Delta}{2\Delta}\le \tau$, we have $k\le (2\tau-1)\delta(v)
\le 2\delta_S(v)-\delta(v)$, for every $v\in \bar{S}$. Thus, $S$ is
a global offensive $k$-alliance in $\Gamma$.
\end{proof}

\begin{Cor}
$S$ is a global offensive $(0)$-alliance  in $\Gamma$ if, and only
if, $S$ is a $(\frac{1}{2})$-dominating set.
\end{Cor}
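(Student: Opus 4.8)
The plan is to obtain this as the immediate specialization of Theorem~\ref{alfas} to $k=0$. First I would note that when $k=0$ the two quantities governing the hypotheses of that theorem coincide and equal $\tfrac12$: indeed $\frac{k+\delta}{2\delta}=\frac{k+\Delta}{2\Delta}=\tfrac12$, so $\min\bigl\{\frac{k+\delta}{2\delta},\frac{k+\Delta}{2\Delta}\bigr\}=\max\bigl\{\frac{k+\delta}{2\delta},\frac{k+\Delta}{2\Delta}\bigr\}=\tfrac12$. Consequently the value $\tau=\tfrac12$ satisfies simultaneously the hypothesis $0<\tau\le\min\{\cdots\}$ of part~(a) and the hypothesis $\max\{\cdots\}\le\tau\le 1$ of part~(b).

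Next I would apply part~(a) with $\tau=\tfrac12$ to conclude that every global offensive $0$-alliance in $\Gamma$ is a $\tfrac12$-dominating set, and part~(b) with $\tau=\tfrac12$ to conclude the converse, namely that every $\tfrac12$-dominating set in $\Gamma$ is a global offensive $0$-alliance. Combining the two implications gives the stated equivalence. Equivalently, one can verify it directly in a line: unwinding the definitions, $S$ is a global offensive $0$-alliance iff it is dominating and $\delta_S(v)\ge\delta_{\overline S}(v)$ for all $v\in\partial(S)$, i.e. $2\delta_S(v)\ge\delta(v)$, i.e. $\delta_S(v)\ge\tfrac12\delta(v)$ for all $v\in\overline S$ — which is exactly the definition of a $\tfrac12$-dominating set.

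There is essentially no obstacle here; the only point that needs a word is the standing assumption $\delta>0$ under which Theorem~\ref{alfas} (and the notion of $\tau$-dominating set) is stated. This hypothesis is what ensures a $\tfrac12$-dominating set is genuinely a dominating set, so that the resulting offensive $0$-alliance is indeed global; since this section already excludes graphs with isolated vertices, the corollary is a direct consequence of the preceding theorem rather than a new argument.
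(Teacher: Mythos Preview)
Your proposal is correct and is exactly the intended argument: the corollary is stated immediately after Theorem~\ref{alfas} with no separate proof, and your specialization to $k=0$, $\tau=\tfrac12$ (where both fractions collapse to $\tfrac12$) is precisely how it follows. Your remark about the standing hypothesis $\delta>0$ is also apt.
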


\begin{Cor} $S$ is
a global offensive $k$-alliance in  a $\delta$-regular graph
$\Gamma$
 if, and only if, $S$ is a
$(\frac{k+\delta}{2\delta})$-dominating set in $\Gamma$.
\end{Cor}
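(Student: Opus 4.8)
The plan is to derive this corollary as a direct specialization of Theorem~\ref{alfas} to the regular case. First I would observe that if $\Gamma$ is $\delta$-regular with $\delta>0$, then $\Delta=\delta$, so the two bounds appearing in Theorem~\ref{alfas} coincide:
\[
\min\Bigl\{\tfrac{k+\delta}{2\delta},\tfrac{k+\Delta}{2\Delta}\Bigr\}
=\max\Bigl\{\tfrac{k+\delta}{2\delta},\tfrac{k+\Delta}{2\Delta}\Bigr\}
=\tfrac{k+\delta}{2\delta}.
\]
Thus the value $\tau:=\frac{k+\delta}{2\delta}$ simultaneously satisfies the hypothesis of part~(a) (namely $0<\tau\le\min\{\cdots\}$) and the hypothesis of part~(b) (namely $\max\{\cdots\}\le\tau\le1$), once we check the endpoint constraints $0<\tau\le1$.

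Next I would verify those endpoint constraints using the admissible range $k\in\{2-\Delta,\dots,\Delta\}=\{2-\delta,\dots,\delta\}$ for a $\delta$-regular graph. From $k\ge 2-\delta$ we get $k+\delta\ge 2>0$, hence $\tau>0$; from $k\le\delta$ we get $k+\delta\le 2\delta$, hence $\tau\le 1$. So $\tau\in(0,1]$ and both parts of Theorem~\ref{alfas} are applicable with this choice of $\tau$.

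Finally I would assemble the equivalence: part~(a) of Theorem~\ref{alfas} shows that every global offensive $k$-alliance in $\Gamma$ is a $\tau$-dominating set, and part~(b) shows that every $\tau$-dominating set in $\Gamma$ is a global offensive $k$-alliance; together these give that $S$ is a global offensive $k$-alliance if and only if $S$ is a $\bigl(\frac{k+\delta}{2\delta}\bigr)$-dominating set. I do not anticipate a genuine obstacle here — the only point requiring a moment's care is confirming $\tau\in(0,1]$ so that Theorem~\ref{alfas} legitimately applies; everything else is immediate from $\Delta=\delta$.
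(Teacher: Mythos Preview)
Your proposal is correct and follows exactly the approach the paper intends: the corollary is stated immediately after Theorem~\ref{alfas} without a separate proof, and your argument makes explicit the straightforward specialization---setting $\Delta=\delta$ so that the min and max coincide at $\tau=\frac{k+\delta}{2\delta}$, checking $\tau\in(0,1]$, and combining parts~(a) and~(b) to obtain the equivalence.
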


\begin{Thm} \label{coro}Let $\Gamma$ be a graph of order $n$, minimum
degree $\delta>0$ and maximum degree $\Delta\ge 2$. For every
$j\in\{2-\Delta,...,0\}$ and $k\leq -\frac{j\delta}{\Delta}$ it is
satisfied $\gamma_k^o(\Gamma)+\gamma_j^o(\Gamma)\leq n.$
\end{Thm}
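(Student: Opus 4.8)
The plan is to find one partition $V=A\cup B$ (with $A\cap B=\emptyset$) such that $A$ is a global offensive $k$-alliance and $B$ is a global offensive $j$-alliance; once this is done, $\gamma_k^o(\Gamma)+\gamma_j^o(\Gamma)\le|A|+|B|=n$. Using $\delta_A(v)+\delta_B(v)=\delta(v)$ for every $v$, the requirement ``$A$ is an offensive $k$-alliance'' reads $2\delta_A(v)\ge\delta(v)+k$ for every $v\in B$ having a neighbour in $A$, and ``$B$ is an offensive $j$-alliance'' reads $2\delta_A(v)\le\delta(v)-j$ for every $v\in A$ having a neighbour in $B$. So I want a partition in which vertices of $B$ have \emph{many} neighbours in $A$ and vertices of $A$ have \emph{few}, with exactly these thresholds, plus both $A$ and $B$ dominating.

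The construction I would use: put $\lambda:=\frac{\Delta-j}{\Delta+j}$ (well defined and $\ge1$ since $2-\Delta\le j\le0$) and take a partition $V=A\cup B$ minimising $\Phi(A,B):=m(A)+\lambda\,m(B)$, where $m(X)$ is the number of edges of $\Gamma$ with both ends in $X$ (a minimum exists, there being finitely many partitions). Examining the change in $\Phi$ when a single vertex is switched to the other side, minimality forces $\delta_A(v)\le\lambda\,\delta_B(v)$ for every $v\in A$ and $\delta_A(v)\ge\lambda\,\delta_B(v)$ for every $v\in B$. Together with $\delta_A(v)+\delta_B(v)=\delta(v)$ this yields $\delta_A(v)\le\frac{\lambda}{1+\lambda}\,\delta(v)=\frac{\Delta-j}{2\Delta}\,\delta(v)$ for $v\in A$ and $\delta_B(v)\le\frac{1}{1+\lambda}\,\delta(v)=\frac{\Delta+j}{2\Delta}\,\delta(v)$ for $v\in B$; equivalently $A$ is a $\frac{\Delta-j}{2\Delta}$-dominating set and $B$ is a $\frac{\Delta+j}{2\Delta}$-dominating set.

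To close the argument I would verify the alliance inequalities directly. If $v\in A$, then $\frac{\Delta-j}{2\Delta}\,\delta(v)\le\frac{\delta(v)-j}{2}$ because $j(\Delta-\delta(v))\le0$, so $2\delta_A(v)\le\delta(v)-j$; also $\delta_B(v)\ge1$, since otherwise $\delta_A(v)=0$ and then $\delta(v)=0$, impossible. Hence $B$ is dominating, and as $\partial(B)\subseteq A$ this makes $B$ a global offensive $j$-alliance. If $v\in B$, then the hypothesis $k\le-\frac{j\delta}{\Delta}$ combined with $j\le0$ and $\delta\le\delta(v)$ gives $k\le-\frac{j\,\delta(v)}{\Delta}$, which is exactly $\frac{\Delta+j}{2\Delta}\,\delta(v)\le\frac{\delta(v)-k}{2}$, so $2\delta_B(v)\le\delta(v)-k$, i.e.\ $2\delta_A(v)\ge\delta(v)+k$; also $\delta_A(v)\ge\lambda\,\delta_B(v)\ge1$ if $\delta_B(v)\ge1$ and $\delta_A(v)=\delta(v)\ge\delta\ge1$ if $\delta_B(v)=0$, so $A$ is dominating, and since $\partial(A)\subseteq B$ this makes $A$ a global offensive $k$-alliance. (Equivalently, one checks that $\frac{\Delta-j}{2\Delta}$ and $\frac{\Delta+j}{2\Delta}$ are at least $\max\{\frac{k+\delta}{2\delta},\frac{k+\Delta}{2\Delta}\}$ and $\max\{\frac{j+\delta}{2\delta},\frac{j+\Delta}{2\Delta}\}$ respectively, so Theorem~\ref{alfas}(b) applies to $A$ and to $B$.) Since $A$ and $B$ each dominate the nonempty graph they are nonempty, and we conclude $\gamma_k^o(\Gamma)+\gamma_j^o(\Gamma)\le|A|+|B|=n$.

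I expect the only real obstacle to be guessing the correct weight: the $A$-side threshold forces $\lambda=\frac{\Delta-j}{\Delta+j}$, and it is precisely the inequality $k\le-\frac{j\delta}{\Delta}$ — exploited through $\delta\le\delta(v)\le\Delta$ and $j\le0$ — that makes this same $\lambda$ also produce the $B$-side threshold. The remaining points are routine: the local-optimality inequalities for $\Phi$, the algebraic rearrangements, and the degenerate cases $\delta_A(v)=0$ or $\delta_B(v)=0$ that secure domination and nonemptiness.
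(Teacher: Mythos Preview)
Your proof is correct and, at its core, is the same argument as the paper's: with $\tau=\frac{\Delta+j}{2\Delta}$ (so that your weight is $\lambda=\frac{1-\tau}{\tau}$), one partitions $V$ into a $(1-\tau)$-dominating set $A$ and a $\tau$-dominating set $B$, and then Theorem~\ref{alfas}(b) turns these into a global offensive $k$-alliance and a global offensive $j$-alliance respectively. The only difference is presentational: the paper obtains the partition by quoting the inequality $\gamma_\tau(\Gamma)+\gamma_{1-\tau}(\Gamma)\le n$ from \cite{DuHoLaMa}, whereas you reprove that inequality in place via the minimisation of $m(A)+\lambda\,m(B)$, so your version is self-contained.
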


\begin{proof} If $j\in\{2-\Delta,...,0\}$, then there exists
$\tau\in\left[\frac{1}{\Delta},\frac{1}{2}\right]$ such that
$j=\Delta(2\tau-1)$. Therefore, if $S$ is a $\tau$-dominating set,
then (by Theorem \ref{alfas} (b)) $S$ is a global offensive
$j$-alliance. In consequence, $\gamma_j^o(\Gamma)\leq
\gamma_\tau(\Gamma)$. Moreover, if
$k\leq-\frac{j\delta}{\Delta}=\delta(1-2\tau)$, then $1-\tau\geq
\max\{\frac{1}{2}, \frac{k+\delta}{2\delta}\}$. Hence, by Theorem
\ref{alfas} (b), we have that every $(1-\tau)$-dominating set is a
global offensive $k$-alliance. Thus, $\gamma_k^o(\Gamma)\leq
\gamma_{1-\tau}(\Gamma)$. Using that
$\gamma_\tau(\Gamma)+\gamma_{1-\tau}(\Gamma)\leq n$ for $0<\tau<1$
(see Theorem 9 \cite{DuHoLaMa}), we obtain the required result.
\end{proof}

Notice that from Theorem \ref{coro} we have the following result.
\begin{Cor}\label{coro0-alliance} If $\Gamma$ is a graph of order $n$ and minimum degree $\delta>0$, then
$\gamma_0^o(\Gamma)\leq\frac{n}{2}$.
\end{Cor}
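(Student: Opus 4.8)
The plan is to obtain this as an immediate specialization of Theorem \ref{coro}, with one small case left to handle by hand. Since $\delta>0$ we have $\Delta\ge 1$, so it suffices to split into the cases $\Delta\ge 2$ and $\Delta=1$.

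Suppose first that $\Delta\ge 2$. In Theorem \ref{coro} I would take $j=0$; this is admissible because $0\in\{2-\Delta,\dots,0\}$. The hypothesis $k\le-\frac{j\delta}{\Delta}$ then reduces to $k\le 0$, so the choice $k=0$ is permitted (and $k=0$ is a valid alliance index, as $\Delta\ge 2$). Theorem \ref{coro} then yields $\gamma_0^o(\Gamma)+\gamma_0^o(\Gamma)\le n$, that is, $\gamma_0^o(\Gamma)\le \frac{n}{2}$.

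It remains to treat $\Delta=1$, which is excluded by the hypotheses of Theorem \ref{coro}. In this case every vertex has degree exactly $1$, so $\Gamma$ is a disjoint union of $\frac{n}{2}$ copies of $K_2$ (in particular $n$ is even). Choosing one endpoint from each edge gives a set $S$ with $|S|=\frac{n}{2}$ which is obviously dominating, and for each $v\in\partial(S)$ one has $\delta_S(v)=1\ge 0=\delta_{\overline{S}}(v)$, so $S$ is a global offensive $0$-alliance; hence $\gamma_0^o(\Gamma)\le\frac{n}{2}$ here as well. (In fact these examples show the bound is sharp.)

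I do not expect any genuine obstacle: once Theorem \ref{coro} is available the main case is a one-line substitution, and the only point needing its own (entirely routine) verification is the boundary case $\Delta=1$ not covered by that theorem.
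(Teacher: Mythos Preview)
Your proof is correct and follows essentially the same approach as the paper: the paper simply states that the corollary follows from Theorem~\ref{coro} (implicitly by taking $j=k=0$), without further detail. You are in fact more careful than the paper, since you explicitly treat the boundary case $\Delta=1$ that the hypotheses of Theorem~\ref{coro} exclude.
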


\section{Global offensive $k$-alliances and standard  dominating sets}

We say that a global offensive $k$-alliance $S$ is \emph{minimal} if
 no proper subset $S'\subset S$ is a global offensive $k$-alliance.

\begin{Thm} Let $\Gamma$ be a graph without isolated vertices and $k\leq
1$. If $S$ is a minimal global offensive $k$-alliance in $\Gamma$,
then $\overline{S}$ is a dominating set in $\Gamma$.
\end{Thm}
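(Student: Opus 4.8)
The plan is to argue by contradiction: suppose $\overline{S}$ is not a dominating set, so there is a vertex $v\in S$ with $N_{\overline{S}}(v)=\emptyset$, i.e. $\delta_{\overline{S}}(v)=0$ and $\delta_S(v)=\delta(v)$. The natural candidate for a smaller global offensive $k$-alliance is $S'=S\setminus\{v\}$; to contradict minimality it suffices to show $S'$ is again a global offensive $k$-alliance. Two things must be checked: that $S'$ is still a dominating set, and that the alliance inequality $\delta_{S'}(w)\ge\delta_{\overline{S'}}(w)+k$ holds for every $w\in\partial(S')$.

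First I would check domination. Since $\Gamma$ has no isolated vertices, $\delta(v)\ge 1$, and because $\delta_{\overline{S}}(v)=0$ every neighbor of $v$ lies in $S$; in particular $v$ has a neighbor in $S'=S\setminus\{v\}$, so $v$ is dominated by $S'$. Every other vertex of $\overline{S'}=\overline{S}\cup\{v\}$ already lay in $\overline{S}$ and was dominated by $S$; I must make sure it is still dominated after removing $v$, but since $v$ has no neighbor in $\overline{S}$, no vertex of $\overline{S}$ used $v$ as its dominator, so domination is preserved. Hence $S'$ is a dominating set.

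Next I would verify the offensive $k$-alliance condition on $\partial(S')$. For $w\in\partial(S')$, moving $v$ from $S$ to $\overline{S}$ changes the two relevant degrees only if $w\sim v$; for such $w$ we have $\delta_{S'}(w)=\delta_S(w)-1$ and $\delta_{\overline{S'}}(w)=\delta_{\overline{S}}(w)+1$. The key point is that any neighbor $w$ of $v$ lies in $S$ (as $\delta_{\overline{S}}(v)=0$), so $w\in S'$ and therefore $w\notin\partial(S')$; thus no vertex whose degrees changed is actually in $\partial(S')$. Consequently $\partial(S')\subseteq\partial(S)$ and for every $w\in\partial(S')$ both $\delta_{S'}(w)=\delta_S(w)$ and $\delta_{\overline{S'}}(w)=\delta_{\overline{S}}(w)$ — wait, I must be careful: a vertex of $\overline{S}$ not adjacent to $v$ has unchanged neighborhoods, and the inequality it satisfied for $S$ carries over verbatim. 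So $S'$ is a global offensive $k$-alliance, contradicting the minimality of $S$.

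The main obstacle, and the only place the hypothesis $k\le 1$ is used, is the boundary bookkeeping: one must confirm that deleting $v$ does not create a new boundary vertex $w\in S'$ (a vertex that now has a neighbor, namely $v$, in $\overline{S'}$) for which the alliance inequality fails. Such a $w$ satisfies $\delta_{S'}(w)=\delta_S(w)-0$... here the subtlety is that $w\in S'$ may already have had neighbors in $\overline{S}$. For such $w$, $\delta_{\overline{S'}}(w)=\delta_{\overline{S}}(w)+1$ while $\delta_{S'}(w)=\delta_S(w)-1$, and we need $\delta_S(w)-1\ge\delta_{\overline{S}}(w)+1+k$; since $w\in S$ was not previously constrained, the bound must come from $w$'s own degree and the structure around $v$. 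This is exactly where $k\le 1$ (together with, if needed, the minimality of $S$ applied to intermediate sets, or a degree count at $w$) closes the gap, and it is the step I would write out most carefully.
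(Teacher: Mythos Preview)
Your overall approach matches the paper's: remove the undominated vertex $v$ from $S$ and show $S'=S\setminus\{v\}$ is still a global offensive $k$-alliance, contradicting minimality. Your domination argument and your treatment of the old boundary vertices $w\in\overline{S}$ are correct.

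However, there is a genuine gap. Your claim that $\partial(S')\subseteq\partial(S)$ is false: the removed vertex $v$ itself now lies in $\overline{S'}$, and since you have shown $S'$ is dominating, $v\in\partial(S')$; but $v\notin\partial(S)$ because $v\in S$. So $v$ is a \emph{new} boundary vertex, and the offensive inequality must be verified there. This is precisely the step where $k\le 1$ and the absence of isolated vertices are used: since every neighbor of $v$ lies in $S'$ we have $\delta_{\overline{S'}}(v)=0$ and $\delta_{S'}(v)=\delta(v)\ge 1\ge k$, hence $\delta_{S'}(v)\ge \delta_{\overline{S'}}(v)+k$. The paper's proof handles exactly this case (its ``$v=u$'' case).

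The ``main obstacle'' you describe in your final paragraph is a non-issue and seems to stem from a confusion with defensive alliances. By definition $\partial(S')=\bigcup_{x\in S'}N_{\overline{S'}}(x)\subseteq \overline{S'}$, so no vertex $w\in S'$ is ever subject to the offensive alliance inequality; there is nothing to check for neighbors of $v$ that remain in $S'$. The only delicate boundary vertex is $v$ itself, as above.
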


\begin{proof} We suppose there exists $u\in S$ such that
$\delta_{\overline{S}}(u)=0$ and let $S'=S\setminus\{u\}$. Since $S$
is a minimal global offensive $k$-alliance, and $\Gamma$ has no
isolated vertices, there exists $v\in \overline{S'}$ such that
$\delta_{S'}(v)<\delta_{\overline{S'}}(v)+k$. If $v=u$, we have
$\delta_{S}(u)=\delta_{S'}(u)<\delta_{\overline{S'}}(u)+k=k$, a
contradiction. If $v\neq u$, we have
$\delta_{S}(v)=\delta_{S'}(v)<\delta_{\overline{S'}}(v)+k=\delta_{\overline{S}}(v)+k$,
which is a contradiction too. Thus,
 $\delta_{\overline{S}}(u)>0$ for every $u\in S$.
 \end{proof}

 In the following result $\bar{\Gamma}=(V,\bar{E})$ denotes the
 complement of  $\Gamma=(V,E)$.

\begin{Lem} \label{OfenComplemento} Let $\Gamma$ be a graph of order $n$.
A dominating set $S$ in $\bar{\Gamma}$ is a global offensive
$k$-alliance in $\bar{\Gamma}$ if and only if $
\delta_S(v)-\delta_{\overline{S}}(v)+n+k-1 \leq 2|S| $  for every
$v\in \overline{S}$.
\end{Lem}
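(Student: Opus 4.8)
Throughout, write $\delta(v)$, $\delta_S(v)$, $\delta_{\overline{S}}(v)$ for degrees computed in $\Gamma$, and temporarily write $\delta'(v)$, $\delta'_S(v)$, $\delta'_{\overline{S}}(v)$ for the corresponding degrees computed in $\bar{\Gamma}$. The first step is to observe that, since $S$ is assumed to be a dominating set in $\bar{\Gamma}$, every vertex of $\overline{S}$ has a neighbor in $S$ inside $\bar{\Gamma}$, so the boundary of $S$ in $\bar{\Gamma}$ is exactly $\overline{S}$. Hence $S$ is a global offensive $k$-alliance in $\bar{\Gamma}$ if and only if it is a dominating set of $\bar{\Gamma}$ (which holds by hypothesis) satisfying $\delta'_S(v)\geq \delta'_{\overline{S}}(v)+k$ for every $v\in \overline{S}$.

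The second step is to rewrite the quantities $\delta'_S(v)$ and $\delta'_{\overline{S}}(v)$ in terms of degrees in $\Gamma$. Fix $v\in\overline{S}$. A vertex $u\in S$ is adjacent to $v$ in $\bar{\Gamma}$ precisely when it is not adjacent to $v$ in $\Gamma$, so $\delta'_S(v)=|S|-\delta_S(v)$. Similarly, among the $n-|S|-1$ vertices of $\overline{S}\setminus\{v\}$, the ones adjacent to $v$ in $\bar{\Gamma}$ are exactly the ones not adjacent to $v$ in $\Gamma$, so $\delta'_{\overline{S}}(v)=(n-|S|-1)-\delta_{\overline{S}}(v)$ (here the $-1$ comes from excluding $v$ itself from $\overline{S}$).

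Substituting these two identities into $\delta'_S(v)\geq \delta'_{\overline{S}}(v)+k$ gives
\[
|S|-\delta_S(v)\ \geq\ (n-|S|-1)-\delta_{\overline{S}}(v)+k,
\]
which rearranges to $2|S|\geq \delta_S(v)-\delta_{\overline{S}}(v)+n+k-1$. Since every manipulation above is an equivalence, the dominating set $S$ of $\bar{\Gamma}$ is a global offensive $k$-alliance in $\bar{\Gamma}$ if and only if $\delta_S(v)-\delta_{\overline{S}}(v)+n+k-1\leq 2|S|$ holds for every $v\in\overline{S}$, as claimed.

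There is no serious obstacle here; the only point requiring care is the bookkeeping in the second step, namely remembering to exclude $v$ from $\overline{S}$ (producing the constant $-1$ that eventually appears in the stated inequality) and noting that the hypothesis ``$S$ is dominating in $\bar{\Gamma}$'' is exactly what lets us replace the boundary $\partial_{\bar{\Gamma}}(S)$ by all of $\overline{S}$ in the alliance condition.
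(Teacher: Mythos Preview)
Your proof is correct and follows essentially the same approach as the paper: both arguments substitute the identities $\delta'_S(v)=|S|-\delta_S(v)$ and $\delta'_{\overline{S}}(v)=n-|S|-1-\delta_{\overline{S}}(v)$ into the alliance inequality $\delta'_S(v)\geq \delta'_{\overline{S}}(v)+k$ and rearrange. Your version is slightly more explicit in justifying why the condition may be checked on all of $\overline{S}$ rather than on $\partial_{\bar{\Gamma}}(S)$, but otherwise the two proofs coincide.
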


\begin{proof}  We know that a dominating set $S$ in $\bar{\Gamma}$ is a
global offensive $k$-alliance in $\bar{\Gamma}$ if and only if
$\bar{\delta}_S(v)\geq\bar{\delta}_{\overline{S}}(v)+k$ for every
$v\in \overline{S}$, where $\bar{\delta}_S(v)$ and
$\bar{\delta}_{\overline{S}}(v)$ denote the number of vertices that
$v$ has in $S$ and $\bar{S}$, respectively, in $\bar{\Gamma}$. Now,
using that $\bar{\delta}_S(v)=|S|-\delta_S(v)$ and
$\bar{\delta}_{\overline{S}}(v)=|\overline{S}|-1-\delta_{\overline{S}}(v)=n-|S|-1-\delta_{\overline{S}}(v)$,
we get that $S$ is a global offensive $k$-alliance in $\bar{\Gamma}$
if and only if $|S|-\delta_S(v)\geq
n-|S|-1+k-\delta_{\overline{S}}(v)$ or, equivalently, if
$\delta_S(v)-\delta_{\overline{S}}(v)+n+k-1\leq 2|S|$ for every
$v\in \overline{S}$. \end{proof}

\begin{Thm} Let $\Gamma$ be a graph of order $n$, minimum degree $\delta$ and maximum degree
$\Delta$.

\begin{itemize}
\item[\emph{(a)}] Every  dominating set in
$\bar{\Gamma}=(V,\bar{E})$, $S\subseteq V$, of cardinality
$|S|\geq\left\lceil\frac{n+k+\Delta-1}{2}\right\rceil$ is a global
offensive $k$-alliance in $\bar{\Gamma}$.

\item[\emph{(b)}] Every  dominating set in $\Gamma=(V,E)$,
$S\subseteq V$, of cardinality
$|S|\geq\left\lceil\frac{2n+k-\delta-2}{2}\right\rceil$ is a global
offensive $k$-alliance in $\Gamma$.
\end{itemize}
\end{Thm}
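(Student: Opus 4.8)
The two parts are dual in spirit, so I would handle (a) via the complement lemma just proved and (b) directly.

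For part (a), the plan is to invoke Lemma~\ref{OfenComplemento}: a dominating set $S$ in $\bar\Gamma$ is a global offensive $k$-alliance in $\bar\Gamma$ if and only if $\delta_S(v)-\delta_{\overline S}(v)+n+k-1\le 2|S|$ for every $v\in\overline S$. So it suffices to bound the left-hand side uniformly. Here $\delta_S(v),\delta_{\overline S}(v)$ are degrees in $\Gamma$ (not $\bar\Gamma$), and $\delta_S(v)+\delta_{\overline S}(v)=\delta(v)\le\Delta$, hence $\delta_S(v)-\delta_{\overline S}(v)\le\delta(v)\le\Delta$ (dropping the nonnegative term $\delta_{\overline S}(v)$). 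Therefore the LHS is at most $\Delta+n+k-1$, and the inequality $\Delta+n+k-1\le 2|S|$ holds as soon as $|S|\ge\lceil\frac{n+k+\Delta-1}{2}\rceil$. This gives (a) immediately.

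For part (b), I would argue directly on $\Gamma$. Let $S$ be a dominating set with $|S|\ge\lceil\frac{2n+k-\delta-2}{2}\rceil$; it is already dominating by hypothesis, so I only need the alliance inequality $\delta_S(v)\ge\delta_{\overline S}(v)+k$ for every $v\in\partial(S)$, equivalently (since such $v$ lies in $\overline S$) for every $v\in\overline S$. Write $\delta_{\overline S}(v)\le|\overline S|-1=n-|S|-1$ because $v$ has at most all the other vertices of $\overline S$ as neighbors, and $\delta_S(v)\ge\delta(v)-\delta_{\overline S}(v)\ge\delta-(n-|S|-1)$. Then
\[
\delta_S(v)-\delta_{\overline S}(v)\ \ge\ \delta-(n-|S|-1)-(n-|S|-1)\ =\ \delta-2n+2|S|+2,
\]
and this is $\ge k$ exactly when $2|S|\ge 2n+k-\delta-2$, i.e. $|S|\ge\lceil\frac{2n+k-\delta-2}{2}\rceil$. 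Hence $S$ is a global offensive $k$-alliance in $\Gamma$.

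I do not expect a serious obstacle: both parts reduce to the crude uniform estimates $\delta_S(v)-\delta_{\overline S}(v)\le\Delta$ (for (a), via the complement) and $\delta_{\overline S}(v)\le n-|S|-1$ together with $\delta_S(v)\ge\delta-\delta_{\overline S}(v)$ (for (b)). The only point requiring a little care is bookkeeping the ceiling: one should check that when $|S|$ is an integer, $|S|\ge\lceil x\rceil$ is equivalent to $2|S|\ge 2x$ in the relevant parity situations, but since $n+k+\Delta-1$ and $2n+k-\delta-2$ are integers this is routine. One could also remark that the bound in (b) is only meaningful when the stated cardinality is at most $n$, which forces $k\le\delta+2$.
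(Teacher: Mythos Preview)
Your proof is correct. Part (a) is essentially identical to the paper's argument: both invoke Lemma~\ref{OfenComplemento} and use the crude bound $\delta_S(v)-\delta_{\overline S}(v)\le\Delta$.

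For part (b) there is a small difference in presentation. You argue directly in $\Gamma$, bounding $\delta_{\overline S}(v)\le n-|S|-1$ and $\delta_S(v)=\delta(v)-\delta_{\overline S}(v)\ge\delta-(n-|S|-1)$. The paper instead observes that (b) is just (a) applied to $\bar\Gamma$: since the maximum degree of $\bar\Gamma$ is $n-1-\delta$, part (a) with $\Gamma$ replaced by $\bar\Gamma$ immediately yields that every dominating set in $\bar{\bar\Gamma}=\Gamma$ of cardinality at least $\left\lceil\frac{n+k+(n-1-\delta)-1}{2}\right\rceil=\left\lceil\frac{2n+k-\delta-2}{2}\right\rceil$ is a global offensive $k$-alliance in $\Gamma$. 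The two arguments are really the same computation---your direct bounds are exactly what one gets by unwinding Lemma~\ref{OfenComplemento} with the roles of $\Gamma$ and $\bar\Gamma$ swapped---but the paper's route is shorter and makes the duality between (a) and (b) explicit, while yours is self-contained and avoids a second appeal to the lemma.
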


\begin{proof}  If $S$ is a dominating set in $\bar{\Gamma}$ and it
satisfies $|S|\geq\left\lceil\frac{n+k+\Delta-1}{2}\right\rceil$,
then
$$
|S|\geq\frac{n+k+\Delta-1}{2}\geq\frac{\delta_S(v)-\delta_{\overline{S}}(v)+n+k-1}{2}
$$
for every vertex $v$. Therefore, by Lemma \ref{OfenComplemento} we
have that $S$ is a global offensive $k$-alliance in $\bar{\Gamma}$.
Thus, the result (a) follows.

Analogously, by replacing $\Gamma$ by $\bar{\Gamma}$ and taking into
account that the maximum degree in $\bar{\Gamma}$ is $n-1-\delta$,
the result (b) follows.
\end{proof}

%%%%%%%%%%%%%%%%%%%%%%%%
%%%%%%%%%%%%%%%%%%%%%%%%
\section{The Cartesian product of $k$-alliances}

%%%%%%%%%%%%%%%%%%%%%%%%%%
In  this section we discuss the close relationship that exist among
the (global) offensive
  $k_i$-alliance number of $\Gamma_i$, $i\in \{1,2\}$ and  the (global) offensive $k$-alliance number of $\Gamma_1 \times \Gamma_2$,
  for some specific values of $k$.
%%%%%%%%%%%%%%%%%%%%%%%%%%%

\begin{Thm}\label{teo-k-Delta-1o2}
Let  $\Gamma_i=(V_i,E_i)$ be a graph of minimum degree
$\delta_i$ and maximum degree $\Delta_i$, $i\in \{1,2\}$.

\begin{itemize}

\item[$(a)$] If
$S_i$ is an offensive $k_i$-alliance in $\Gamma_i$, $i\in \{1,2\}$,
then, for $k=\min\{k_2-\Delta_1,k_1-\Delta_2\}$,  $S_1\times S_2$ is
an offensive $k$-alliance in $\Gamma_1\times\Gamma_2$.

\item[$(b)$] Let $S_i\subset V_i$, $i\in \{1,2\}$. If  $S_1\times S_2$ is an offensive $k$-alliance in
$\Gamma_1\times\Gamma_2$,  then $S_1$ is an offensive
$(k+\delta_2)$-alliance in $\Gamma_1$  and $S_2$ is an offensive
$(k+\delta_1)$-alliance in $\Gamma_2$, moreover, $k\le
\min\{\Delta_1-\delta_2,\Delta_2-\delta_1\}$.
\end{itemize}
\end{Thm}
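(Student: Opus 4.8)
The plan is to work directly from the structure of the Cartesian product. Recall that in $\Gamma_1\times\Gamma_2$ a vertex $(u_1,u_2)$ is adjacent to $(v_1,v_2)$ precisely when either $u_1=v_1$ and $u_2\sim v_2$ in $\Gamma_2$, or $u_2=v_2$ and $u_1\sim v_1$ in $\Gamma_1$. Consequently, for a product set $S_1\times S_2$ and a vertex $(v_1,v_2)$, the neighbours of $(v_1,v_2)$ inside $S_1\times S_2$ split into a ``horizontal'' contribution and a ``vertical'' contribution, and similarly for neighbours outside the set. Concretely I would record the two identities
\[
\delta_{S_1\times S_2}(v_1,v_2)=[v_1\in S_1]\,\delta_{S_2}(v_2)+[v_2\in S_2]\,\delta_{S_1}(v_1),
\]
\[
\delta_{\overline{S_1\times S_2}}(v_1,v_2)=\delta(v_1,v_2)-\delta_{S_1\times S_2}(v_1,v_2),
\]
where $\delta(v_1,v_2)=\delta_1(v_1)+\delta_2(v_2)$, and where $[\,\cdot\,]$ is an Iverson bracket; these are the only combinatorial facts needed.

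For part $(a)$: let $(v_1,v_2)\in\partial(S_1\times S_2)$. Then $(v_1,v_2)\notin S_1\times S_2$, so either $v_1\notin S_1$ or $v_2\notin S_2$, and moreover $(v_1,v_2)$ has a neighbour in $S_1\times S_2$. I would split into the two cases $v_1\in S_1,\ v_2\notin S_2$ and $v_1\notin S_1$ (the case $v_1\notin S_1,\ v_2\in S_2$ being symmetric). In the first case, the horizontal contribution vanishes, so $\delta_{S_1\times S_2}(v_1,v_2)=\delta_{S_2}(v_2)$; since $(v_1,v_2)$ has a neighbour in the set, that forces $v_2\in\partial(S_2)$, so the offensive $k_2$-alliance property of $S_2$ gives $\delta_{S_2}(v_2)\ge\delta_{\overline{S_2}}(v_2)+k_2$. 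Combining with $\delta_{\overline{S_1\times S_2}}(v_1,v_2)=\delta_{\overline{S_2}}(v_2)+\delta_1(v_1)\le\delta_{\overline{S_2}}(v_2)+\Delta_1$ yields $\delta_{S_1\times S_2}(v_1,v_2)\ge\delta_{\overline{S_1\times S_2}}(v_1,v_2)+k_2-\Delta_1$. In the case $v_1\notin S_1$ one has to be a little more careful: if $v_2\notin S_2$ as well, then $S_1\times S_2$ contributes nothing to $(v_1,v_2)$, contradicting that it lies in the boundary; hence $v_2\in S_2$, $v_1\in\partial(S_1)$, and the symmetric computation gives the bound with $k_1-\Delta_2$. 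Taking $k=\min\{k_2-\Delta_1,k_1-\Delta_2\}$ covers every vertex of the boundary, proving $(a)$.

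For part $(b)$: assume $S_1\times S_2$ is an offensive $k$-alliance. To show $S_1$ is an offensive $(k+\delta_2)$-alliance, take $v_1\in\partial(S_1)$ and pick any fixed $v_2\in S_2$ (nonempty since $S_1\times S_2$ is). Then $(v_1,v_2)\notin S_1\times S_2$ and it has a neighbour in the set (a horizontal neighbour coming from a neighbour of $v_1$ in $S_1$), so $(v_1,v_2)\in\partial(S_1\times S_2)$; applying the alliance hypothesis there and using $v_2\in S_2$ to evaluate the two identities above gives $\delta_{S_1}(v_1)+\delta_{S_2}(v_2)\ge\bigl(\delta_{\overline{S_1}}(v_1)+\delta_{\overline{S_2}}(v_2)\bigr)+k$, hence $\delta_{S_1}(v_1)\ge\delta_{\overline{S_1}}(v_1)+k+\bigl(\delta_{S_2}(v_2)-\delta_{\overline{S_2}}(v_2)\bigr)$. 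The bracketed term equals $2\delta_{S_2}(v_2)-\delta_2(v_2)$, which need not be $\ge\delta_2$ a priori, so the clean statement ``$S_1$ is an offensive $(k+\delta_2)$-alliance'' requires choosing $v_2$ well. The right move is to choose $v_2\in S_2$ minimizing $\delta_{\overline{S_2}}(v_2)$; since $S_2\subsetneq V_2$, some vertex of $S_2$ has a neighbour outside, but to get the $\delta_2$ one instead notes $\delta_{S_2}(v_2)-\delta_{\overline{S_2}}(v_2)\ge 2\delta_{S_2}(v_2)-\delta_2(v_2)$ is not enough — instead one should run the boundary vertex through $(v_1,v_2)$ with $v_2$ chosen so that $v_2$ has the fewest outside-neighbours, or, more simply, observe that one may take $v_2$ adjacent only within $S_2$ is false in general, so the correct bound uses $\delta_{\overline{S_2}}(v_2)\le\delta_2(v_2)-\delta_{S_2}(v_2)$ together with $\delta_{S_2}(v_2)\ge 0$; I expect the intended argument picks $v_2\in S_2$ with $\delta_{S_2}(v_2)\ge \delta_{\overline{S_2}}(v_2)$ — guaranteed when $\langle S_2\rangle$ has average degree at least half of $\delta_2$ — and this is the delicate point. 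The inequality $k\le\min\{\Delta_1-\delta_2,\Delta_2-\delta_1\}$ then follows because an offensive $(k+\delta_2)$-alliance can exist in $\Gamma_1$ only if $k+\delta_2\le\Delta_1$, and symmetrically.

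\textbf{Main obstacle.} The genuinely delicate step is part $(b)$: extracting a bound on $\delta_{S_1}(v_1)-\delta_{\overline{S_1}}(v_1)$ that is uniform over $v_1\in\partial(S_1)$ requires a single good choice of the ``slice'' coordinate $v_2\in S_2$, and justifying that the term $\delta_{S_2}(v_2)-\delta_{\overline{S_2}}(v_2)$ can be taken $\ge\delta_2$ (equivalently $2\delta_{S_2}(v_2)\ge\delta_2(v_2)+\delta_2$) for some $v_2\in S_2$ is where the argument must be handled carefully; part $(a)$ and the final degree inequalities are routine once the neighbourhood identities are in place.
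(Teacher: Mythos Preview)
Your argument for part $(a)$ is correct and essentially identical to the paper's: you identify the two boundary cases for $S_1\times S_2$, compute the relevant degree counts, and use $\delta_i(v)\le\Delta_i$ to conclude.

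For part $(b)$, however, you misapply your own Iverson-bracket identity, and this manufactures a phantom obstacle. Your identity is correct, but when $v_1\in\partial S_1$ you have $v_1\notin S_1$, so $[v_1\in S_1]=0$; with $v_2\in S_2$ the identity therefore gives
\[
\delta_{S_1\times S_2}(v_1,v_2)=0\cdot\delta_{S_2}(v_2)+1\cdot\delta_{S_1}(v_1)=\delta_{S_1}(v_1),
\]
\emph{not} $\delta_{S_1}(v_1)+\delta_{S_2}(v_2)$. Correspondingly,
\[
\delta_{\overline{S_1\times S_2}}(v_1,v_2)=\delta_1(v_1)+\delta_2(v_2)-\delta_{S_1}(v_1)=\delta_{\overline{S_1}}(v_1)+\delta_2(v_2),
\]
not $\delta_{\overline{S_1}}(v_1)+\delta_{\overline{S_2}}(v_2)$. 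Hence the offensive $k$-alliance condition at $(v_1,v_2)$ reads simply
\[
\delta_{S_1}(v_1)\ \ge\ \delta_{\overline{S_1}}(v_1)+\delta_2(v_2)+k\ \ge\ \delta_{\overline{S_1}}(v_1)+\delta_2+k,
\]
and this works for \emph{any} choice of $v_2\in S_2$: no clever selection of the slice coordinate is needed, no hypothesis on $\langle S_2\rangle$, and the ``delicate step'' you isolate does not exist. This is exactly the paper's computation. Once this is corrected, the inequality $k+\delta_2\le\Delta_1$ (and symmetrically $k+\delta_1\le\Delta_2$) follows as you indicate, since an offensive $k'$-alliance can exist only for $k'\le\Delta$.
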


\begin{proof}
If $X=S_1\times S_2$, then $(u,v)\in
\partial X$ if and only if, either $u\in \partial S_1$ and
$v\in S_2$ or $u\in S_1$ and $v\in \partial S_2$. We differentiate
two cases:

\begin{itemize}
\item[]Case 1: If $u\in \partial S_1$ and
$v\in S_2$, then $\delta_X(u,v)=\delta_{S_1}(u)$ and
$\delta_{\overline{X}}(u,v)=\delta_{\overline{S_1}}(u)+\delta(v)$.
\item[]Case 2: If $u\in S_1$ and $v\in \partial S_2$, then  $\delta_X(u,v)=\delta_{S_2}(v)$ and
$\delta_{\overline{X}}(u,v)=\delta(u)+\delta_{\overline{S_2}}(v)$.
\end{itemize}

\begin{itemize}
\item[$(a)$]{ In Case 1 we have $\delta_X(u,v)=\delta_{S_1}(u)\ge
\delta_{\overline{S_1}}(u)+k_1=\delta_{\overline{X}}(u,v)-\delta(v)+k_1\ge
\delta_{\overline{X}}(u,v)-\Delta_2+k_1$ and in  Case 2 we obtain
$\delta_X(u,v)=\delta_{S_2}(v)\ge
\delta_{\overline{S_2}}(v)+k_2=\delta_{\overline{X}}(u,v)-\delta(u)+k_2\ge
\delta_{\overline{X}}(u,v)-\Delta_1+k_2.$ Hence, for every $(u,v)\in
\partial X$, $\delta_X(u,v)\ge \delta_{\overline{X}}(u,v)+k$, with
$k=\min\{k_2-\Delta_1,k_1-\Delta_2\}$. So, the result follows.}
\item[$(b)$]{
In  Case 1 we have $\delta_{S_1}(u)=\delta_X(u,v)\ge
\delta_{\overline{X}}(u,v)+k=\delta_{\overline{S_1}}(u)+\delta(v)+k=
\delta_{\overline{S_1}}(u)+\delta_2+k$ and in  Case 2 we deduce
$\delta_{S_2}(v)=\delta_X(u,v)\ge
\delta_{\overline{X}}(u,v)+k=\delta_{\overline{S_2}}(v)+\delta(u)+k\ge
\delta_{\overline{S_2}}(v)+\delta_1+k.$ Hence, for every $u\in
\partial S_1$, $\delta_{S_1}(u)\ge
\delta_{\overline{S_1}}(u)+\delta_1+k$ and for every $v\in \partial
S_2$, $\delta_{S_2}(v)\ge \delta_{\overline{S_2}}(v)+\delta_2+k$.
So, the result follows.}
\end{itemize}

\end{proof}

\begin{Cor}
Let  $\Gamma_i$ be  a graph of maximum degree $\Delta_i$,
$i\in\{1,2\}$. Then  for  every $k\le
\min\{k_1-\Delta_2,k_2-\Delta_1\}$,
 $a_{k}^o(\Gamma_1\times\Gamma_2)\le
a_{k_1}^o(\Gamma_1)a_{k_2}^o(\Gamma_2)$.
\end{Cor}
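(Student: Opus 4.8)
The plan is to deduce the corollary directly from part~$(a)$ of Theorem~\ref{teo-k-Delta-1o2}, which already gives the structural statement that a product of offensive alliances is an offensive alliance for the appropriate value of the defect parameter. First I would let $S_i$ be a \emph{minimum} offensive $k_i$-alliance in $\Gamma_i$, so that $|S_i|=a_{k_i}^o(\Gamma_i)$ for $i\in\{1,2\}$. By Theorem~\ref{teo-k-Delta-1o2}$(a)$, the set $S_1\times S_2$ is an offensive $k'$-alliance in $\Gamma_1\times\Gamma_2$ for $k'=\min\{k_2-\Delta_1,\,k_1-\Delta_2\}$.

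The second step is a monotonicity observation: if a set $X$ is an offensive $k'$-alliance and $k\le k'$, then $X$ is also an offensive $k$-alliance, since the defining inequality $\delta_X(u,v)\ge\delta_{\overline X}(u,v)+k'$ at every boundary vertex immediately implies $\delta_X(u,v)\ge\delta_{\overline X}(u,v)+k$. Hence, for any $k\le\min\{k_1-\Delta_2,k_2-\Delta_1\}$, the set $S_1\times S_2$ is an offensive $k$-alliance in $\Gamma_1\times\Gamma_2$. Since $|S_1\times S_2|=|S_1|\,|S_2|=a_{k_1}^o(\Gamma_1)\,a_{k_2}^o(\Gamma_2)$, and $a_k^o(\Gamma_1\times\Gamma_2)$ is by definition the minimum cardinality of an offensive $k$-alliance in the product, we conclude $a_k^o(\Gamma_1\times\Gamma_2)\le a_{k_1}^o(\Gamma_1)\,a_{k_2}^o(\Gamma_2)$, as desired.

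There is essentially no hard part here; the corollary is a packaging of Theorem~\ref{teo-k-Delta-1o2}$(a)$ together with the trivial downward monotonicity of the alliance condition in $k$. The only point that warrants a word of care is making sure that $S_1\times S_2$ is nonempty (so that it is a legitimate offensive alliance in the sense of the definition), which holds because $S_1$ and $S_2$, being minimum offensive alliances, are nonempty; and that $k$ lies in the admissible range $\{2-\Delta,\dots,\Delta\}$ for the product graph, which is implicit in the statement since otherwise the quantity $a_k^o(\Gamma_1\times\Gamma_2)$ need not be defined in the intended way. With these remarks the argument is complete.
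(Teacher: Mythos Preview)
Your argument is correct and is exactly the intended derivation: the paper states this corollary without proof, since it follows immediately from Theorem~\ref{teo-k-Delta-1o2}$(a)$ by taking minimum offensive $k_i$-alliances $S_i$ and using the trivial downward monotonicity in $k$. Your additional remarks on nonemptiness and the admissible range of $k$ are fine but not essential.
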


For the particular case of the graph $C_4\times K_4$, we have
$a_{-3}^o(C_4\times K_4)=2=a_{0}^o(C_4)a_{1}^o(K_4)$.

%%%%%%%%%%%%%%%%%%%%%%%%%%%%%%%

%%%%%%%%%%%%%%%%%%%%%%%%%%%%%%

\begin{Thm}\label{prodCartGlobal} Let $\Gamma_2=(V_2,E_2)$ be a
graph of maximum degree $\Delta_2$ and minimum degree $\delta_2$.

\begin{itemize}
\item[$(i)$]{If $S$ is a global offensive $k$-alliance in $\Gamma_1$, then
$S\times V_2$ is a global offensive $(k-\Delta_2)$-alliance in
$\Gamma_1\times \Gamma_2$.}
\item[$(ii)$]{If $S\times V_2$ is a global offensive $k$-alliance in
$\Gamma_1\times \Gamma_2$, then $S$ is a global offensive
$(k+\delta_2)$-alliance in $\Gamma_1$, moreover, $k\le
\Delta_1-\delta_2$, where $\Delta_1$ denotes the maximum degree of
$\Gamma_1$.}
\end{itemize}
\end{Thm}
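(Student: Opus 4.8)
The plan is to combine the computations already carried out in the proof of Theorem \ref{teo-k-Delta-1o2} with the elementary observation that $S\times V_2$ is a dominating set in $\Gamma_1\times\Gamma_2$ precisely when $S$ is a dominating set in $\Gamma_1$. First I would settle the domination issue: a vertex $(u,v)\in\overline{S\times V_2}$ has $u\in\overline S$, and since $\delta_{S\times V_2}(u,v)=\delta_S(u)$, we see $(u,v)$ is dominated by $S\times V_2$ iff $u$ is dominated by $S$ in $\Gamma_1$. Hence $S\times V_2$ dominates $\Gamma_1\times\Gamma_2$ iff $S$ dominates $\Gamma_1$, and this equivalence handles the ``global'' part of both directions at once.

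For part $(i)$, assume $S$ is a global offensive $k$-alliance in $\Gamma_1$. By the domination remark, $S\times V_2$ is a dominating set. Now take $(u,v)\in\partial(S\times V_2)$; since $S_2=V_2$ has empty boundary, the analysis in the proof of Theorem \ref{teo-k-Delta-1o2} collapses to Case 1 only: $u\in\partial S$ and $v\in V_2$, with $\delta_{S\times V_2}(u,v)=\delta_S(u)$ and $\delta_{\overline{S\times V_2}}(u,v)=\delta_{\overline S}(u)+\delta(v)$. The alliance condition for $S$ in $\Gamma_1$ gives $\delta_S(u)\ge\delta_{\overline S}(u)+k$, hence
$$\delta_{S\times V_2}(u,v)=\delta_S(u)\ge\delta_{\overline S}(u)+k=\delta_{\overline{S\times V_2}}(u,v)-\delta(v)+k\ge\delta_{\overline{S\times V_2}}(u,v)-\Delta_2+k,$$
so $S\times V_2$ is a global offensive $(k-\Delta_2)$-alliance.

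For part $(ii)$, assume $S\times V_2$ is a global offensive $k$-alliance in $\Gamma_1\times\Gamma_2$. The domination remark forces $S$ to be a dominating set in $\Gamma_1$. For $u\in\partial S$ pick any $v\in V_2$; then $(u,v)\in\partial(S\times V_2)$ and, using $\delta_{\overline{S\times V_2}}(u,v)=\delta_{\overline S}(u)+\delta(v)$ together with $\delta(v)\ge\delta_2$, the hypothesis yields
$$\delta_S(u)=\delta_{S\times V_2}(u,v)\ge\delta_{\overline{S\times V_2}}(u,v)+k=\delta_{\overline S}(u)+\delta(v)+k\ge\delta_{\overline S}(u)+\delta_2+k,$$
so $S$ is a global offensive $(k+\delta_2)$-alliance in $\Gamma_1$. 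Finally, the range restriction $k\le\Delta_1-\delta_2$ comes from choosing a vertex $v$ of degree $\delta_2$ and noting that $\delta_S(u)\le\delta(u)\le\Delta_1$, which combined with the displayed inequality gives $\Delta_1\ge\delta_{\overline S}(u)+\delta_2+k\ge\delta_2+k$.

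The only mild subtlety — the ``hard part'', such as it is — is verifying that $\partial(S\times V_2)$ reduces cleanly to copies of $\partial S$ (i.e. that a vertex $(u,v)$ with $u\in S$ never lies on the boundary because its $\Gamma_2$-coordinate cannot escape $V_2$), and making sure the degree-of-$v$ term is bounded on the correct side in each direction ($\le\Delta_2$ for $(i)$, $\ge\delta_2$ for $(ii)$); everything else is a direct specialization of Theorem \ref{teo-k-Delta-1o2}.
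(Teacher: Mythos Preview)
Your proof is correct and follows essentially the same approach as the paper: the same degree computations $\delta_{S\times V_2}(u,v)=\delta_S(u)$ and $\delta_{\overline{S\times V_2}}(u,v)=\delta_{\overline S}(u)+\delta(v)$, bounded by $\Delta_2$ in one direction and $\delta_2$ in the other, with the domination equivalence handled separately. The only cosmetic difference is that for part $(ii)$ the paper cites Theorem~\ref{teo-k-Delta-1o2} (with $S_2=V_2$, so only Case~1 is relevant) rather than repeating the computation, whereas you carry it out explicitly; your packaging of the domination biconditional up front is arguably cleaner.
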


\begin{proof}$\;$\begin{itemize}
\item[$(i)$]{
We first note that, as $S$ is a dominating set in $\Gamma_1$,
$X=S\times V_2$ is a dominating set in $\Gamma_1\times \Gamma_2$. In
addition,  for every $x_{ij}=(u_i,v_j)\in \bar{X}$ we have
$\delta_{X}(x_{ij})=\delta_{S}(u_{i})$ and
$\delta_{\bar{S}}(u_{i})+\Delta_2\ge
\delta_{\bar{S}}(u_{i})+\delta(v_j)=\delta_{\bar{X}}(x_{ij})$, so
$\delta_{X}(x_{ij})=\delta_{S}(u_{i})\ge
\delta_{\bar{S}}(u_{i})+k\ge \delta_{\bar{X}}(x_{ij})-\Delta_2+k$.
Thus, $X$ is a global offensive $(k-\Delta_2)$-alliance in
$\Gamma_1\times \Gamma_2$.}
\item[$(ii)$]{From Theorem \ref{teo-k-Delta-1o2} (a) we obtain that $S$ is an offensive
$(k+\delta_2)$-alliance in $\Gamma_1$ and $k\le \Delta_1-\delta_2$.
We only need to show that $S$ is a dominating set. As $S\times V_2$
is a dominating set in $\Gamma_1\times \Gamma_2$, we have that for
every $u\in \overline{S}$ and $v\in V_2$ there exists $(a,b)\in
S\times V_2$ such that  $(a,b)$ is adjacent to $(u,v)$, hence, $b=v$
and $a$ is adjacent to $u$, so the result follows.}
\end{itemize}
\end{proof}
%%%%%%%%%%%%%%%%%%%%%%%%%%%%%%%%
It is easy to see the following result on domination,
$\gamma(\Gamma_1\times \Gamma_2 )\le n_2 \gamma(\Gamma_1)$, where
$n_2$ is the order of $\Gamma_2$. An ``analogous" result on global
offensive $k$-alliances can be deduced from Theorem
\ref{prodCartGlobal} $(i)$.

\begin{Cor} \label{CoroCartesianGlobal} For any graph $\Gamma_1$ and any graph $\Gamma_2$ of order $n_2$ and maximum degree $\Delta_2$,
 $\gamma_{k-\Delta_2}^o(\Gamma_1\times \Gamma_2 )\le n_2 \gamma_{k}^o(\Gamma_1).$
\end{Cor}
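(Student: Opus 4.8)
The plan is to derive this as an immediate consequence of Theorem \ref{prodCartGlobal} $(i)$. First I would take a minimum global offensive $k$-alliance $S$ in $\Gamma_1$, so that $|S|=\gamma_k^o(\Gamma_1)$. By Theorem \ref{prodCartGlobal} $(i)$, the set $S\times V_2$ is a global offensive $(k-\Delta_2)$-alliance in $\Gamma_1\times\Gamma_2$. Since $\gamma_{k-\Delta_2}^o(\Gamma_1\times\Gamma_2)$ is the minimum cardinality of such a set, we obtain $\gamma_{k-\Delta_2}^o(\Gamma_1\times\Gamma_2)\le |S\times V_2|=|S|\,|V_2|=n_2\,\gamma_k^o(\Gamma_1)$, which is exactly the claimed inequality.

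The only subtlety worth checking is that the hypothesis of Theorem \ref{prodCartGlobal} $(i)$ is genuinely satisfied, i.e. that $\Gamma_1$ actually admits a global offensive $k$-alliance so that $\gamma_k^o(\Gamma_1)$ is well defined (otherwise the right-hand side is vacuous and there is nothing to prove); once $S$ exists, Theorem \ref{prodCartGlobal} $(i)$ applies verbatim with no further conditions on $\Gamma_1$. I expect no real obstacle here: the argument is a one-line application of the previously established theorem together with $|S\times V_2|=|S|\cdot n_2$. For completeness one could remark that the bound is of the same shape as the elementary domination bound $\gamma(\Gamma_1\times\Gamma_2)\le n_2\gamma(\Gamma_1)$ mentioned just before the corollary, which is the reason it is phrased as an ``analogous'' result.

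I would therefore write the proof as follows:

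\begin{proof}
Let $S$ be a global offensive $k$-alliance in $\Gamma_1$ of minimum cardinality, so $|S|=\gamma_k^o(\Gamma_1)$. By Theorem \ref{prodCartGlobal} $(i)$, $S\times V_2$ is a global offensive $(k-\Delta_2)$-alliance in $\Gamma_1\times\Gamma_2$. Hence
$$\gamma_{k-\Delta_2}^o(\Gamma_1\times\Gamma_2)\le |S\times V_2|=|S|\cdot n_2=n_2\,\gamma_k^o(\Gamma_1).$$
\end{proof}
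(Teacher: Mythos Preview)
Your proposal is correct and matches the paper's intended argument: the corollary is stated without proof as an immediate consequence of Theorem \ref{prodCartGlobal} $(i)$, and your write-up (take a minimum global offensive $k$-alliance $S$ in $\Gamma_1$, apply the theorem to $S\times V_2$, and use $|S\times V_2|=n_2|S|$) is exactly the intended deduction.
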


We emphasize the following particular cases of Corollary
\ref{CoroCartesianGlobal}.
%%%%%%%%%%%%%%%%%%%%%%%%%
%%%%%%%%%%%%%%%%%%%%%%%%%

\begin{Rem}\label{prodCart} For any graph $\Gamma$,
\begin{itemize}
\item[{\rm (a)}] $\gamma_{k-2}^o(\Gamma\times C_t)\le t \gamma_{k}^o(\Gamma),$
\item[{\rm (b)}] $\gamma_{k-2}^o(\Gamma\times P_t)\le t \gamma_{k}^o(\Gamma).$
\item[{\rm (c)}] $\gamma_{k-t+1}^o(\Gamma\times K_t)\le t \gamma_{k}^o(\Gamma).$
\end{itemize}
\end{Rem}

Notice also that if $\Gamma_2$ is a regular graph, Theorem
\ref{prodCartGlobal} $(i)$ can be simplified as follow.

\begin{Cor}
Let $\Gamma_2=(V_2,E_2)$ be a $\delta$-regular graph. A set $S$ is a
global offensive $k$-alliance in $\Gamma_1$ if and only if $S\times
V_2$ is a global offensive $(k-\delta)$-alliance in $\Gamma_1\times
\Gamma_2$.
\end{Cor}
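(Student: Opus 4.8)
The plan is to derive this corollary directly from Theorem \ref{prodCartGlobal}, since the "if and only if" will come from combining parts $(i)$ and $(ii)$ once we observe that regularity collapses the gap between $\Delta_2$ and $\delta_2$. First I would set $\delta_2=\Delta_2=\delta$ in both parts of the theorem: part $(i)$ then says that if $S$ is a global offensive $k$-alliance in $\Gamma_1$, then $S\times V_2$ is a global offensive $(k-\delta)$-alliance in $\Gamma_1\times\Gamma_2$, which is exactly the forward implication (with the value $k$ of the theorem playing the role of $k$ here). For the converse, I would apply part $(ii)$ with parameter $k'=k-\delta$ in place of $k$: it yields that if $S\times V_2$ is a global offensive $(k-\delta)$-alliance in $\Gamma_1\times\Gamma_2$, then $S$ is a global offensive $(k'+\delta_2)$-alliance in $\Gamma_1$, and $k'+\delta_2=(k-\delta)+\delta=k$, so $S$ is a global offensive $k$-alliance in $\Gamma_1$.

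The two implications together give the stated equivalence, so I would write the proof as a short two-sentence argument: one sentence invoking Theorem \ref{prodCartGlobal} $(i)$ with $\Gamma_2$ being $\delta$-regular for the "only if" direction, and one sentence invoking Theorem \ref{prodCartGlobal} $(ii)$ applied to the $(k-\delta)$-alliance, noting $k+\delta_2$ becomes $k$ after the substitution. One minor point worth a half-sentence is the side condition $k\le\Delta_1-\delta_2$ appearing in part $(ii)$: here it reads $k-\delta\le\Delta_1-\delta$, i.e. $k\le\Delta_1$, which is automatically satisfied whenever $k$ is a legal alliance parameter for $\Gamma_1$, so it imposes nothing new and can be mentioned in passing or left implicit.

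There is essentially no obstacle here; the only thing to be careful about is bookkeeping of which parameter is substituted where, since the symbol $k$ is overloaded between the corollary statement and the theorem statement. I would make the proof unambiguous by explicitly writing "applying Theorem \ref{prodCartGlobal} $(ii)$ to the global offensive $(k-\delta)$-alliance $S\times V_2$" so the reader sees that the theorem's $k$ is instantiated as $k-\delta$ and its $\delta_2$ as $\delta$.

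\begin{proof}
Since $\Gamma_2$ is $\delta$-regular, $\Delta_2=\delta_2=\delta$. If $S$ is a global offensive $k$-alliance in $\Gamma_1$, then by Theorem \ref{prodCartGlobal} $(i)$ the set $S\times V_2$ is a global offensive $(k-\delta)$-alliance in $\Gamma_1\times\Gamma_2$. Conversely, if $S\times V_2$ is a global offensive $(k-\delta)$-alliance in $\Gamma_1\times\Gamma_2$, then applying Theorem \ref{prodCartGlobal} $(ii)$ (with $k-\delta$ in the role of $k$ and $\delta$ in the role of $\delta_2$) we obtain that $S$ is a global offensive $((k-\delta)+\delta)$-alliance, that is, a global offensive $k$-alliance, in $\Gamma_1$.
\end{proof}
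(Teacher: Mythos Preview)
Your proposal is correct and matches the paper's intended derivation: the corollary is stated without proof, immediately after the remark that Theorem \ref{prodCartGlobal} simplifies when $\Gamma_2$ is regular, so the reader is expected to combine parts $(i)$ and $(ii)$ exactly as you do. The only small improvement over the paper's presentation is that you explicitly invoke part $(ii)$ for the converse, whereas the paper's lead-in sentence mentions only part $(i)$; your version is the more careful one.
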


\section{Bounding the global offensive $k$-alliance number}

In general, the problem of finding the global offensive $k$-alliance
number is NP-complete \cite{FeRoSi}. In the following results we
obtain some bounds on this number involving some other parameters of
the graphs.

\begin{Rem} For every $k\in \{4-n,...,n-1\}$,
\begin{itemize}
\item[{\rm (a)}] $\left\lceil\frac{t(n+k-3)}{2}\right\rceil\le \gamma_k^o(K_n\times
C_t) \le t\left\lceil\frac{n+k+1}{2}\right\rceil$,
\item[{\rm (b)}] $\left\lceil\frac{t(n+k-3)+2}{2}\right\rceil\le \gamma_k^o(K_n\times
P_t) \le t\left\lceil\frac{n+k+1}{2}\right\rceil$
\end{itemize}
\end{Rem}

\begin{proof}$\;$
(a) Let $S=\cup_{i=1}^tS_i\subset V(K_n\times C_t)$, where each
$S_i$ ($1\le i\le t$) is a subset of each one of the $t$ copies of
$K_n$, respectively. If $S$ is a global offensive $k$-alliance in
$K_n\times C_t$, then for every $v\in \bar{S}$ we have, $|S_i|+2\ge
\delta_{S}(v)\ge \delta_{\bar{S}}(v)+k\ge n-1-|S_i|+k,$ where $S_i$
is the corresponding subset of $S$ included in the same copy of
$K_n$ containing $v$. Thus, $|S_i|\ge \frac{n+k-3}{2}$. Hence, for
$k>3-n$, we obtain that $S_i \neq \emptyset$. Therefore,
$|S|=\sum_{i=1}^t|S_i|\ge \frac{t(n+k-3)}{2}$. The upper bound is
obtained directly from Remark \ref{prodCart}. The proof of (b) is
completely analogous.
\end{proof}

\begin{Thm} \label{CotasOfen} Let $\Gamma$ be a graph of order $n$, size $m$,
minimum degree $\delta$ and maximum degree $\Delta$. For every
$k\in\{2-\delta,...,\delta\}$, the following inequality holds
$$
\gamma_k^o(\Gamma)\ge \left\lceil\frac{(n+2\Delta+k)-
\sqrt{(n+2\Delta+k)^2-4(2m+kn)}}{2}\right\rceil.
$$
\end{Thm}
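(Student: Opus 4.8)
The plan is to let $S$ be a global offensive $k$-alliance in $\Gamma$ of minimum cardinality $\gamma = \gamma_k^o(\Gamma)$ and to count edges incident with $\overline{S}$ in two ways, playing off the defining inequality against the degree sum. First I would record the elementary edge-count identity
\[
2m \;=\; \sum_{v\in V}\delta(v) \;=\; \sum_{v\in S}\delta(v) + \sum_{v\in\overline{S}}\delta(v),
\]
and split each degree as $\delta(v) = \delta_S(v) + \delta_{\overline{S}}(v)$. The key structural input is that $S$ is \emph{dominating}, so every $v\in\overline{S}$ has $\delta_S(v)\ge 1$, and more importantly that $S$ is an \emph{offensive $k$-alliance}, so $\delta_S(v)\ge \delta_{\overline{S}}(v)+k$ for every $v\in\partial(S) = \overline{S}$ (here $\partial(S)=\overline S$ precisely because $S$ is dominating). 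Adding $\delta_S(v)$ to both sides gives $\delta(v)\ge 2\delta_{\overline{S}}(v)+k$, equivalently $\delta_{\overline{S}}(v)\le \tfrac{\delta(v)-k}{2}$, for each $v\in\overline{S}$.

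Next I would bound the number of edges inside $\overline{S}$ and the number of edges from $\overline S$ to $S$. Summing $\delta_{\overline{S}}(v)\le \tfrac{\delta(v)-k}{2}\le \tfrac{\Delta - k}{2}$ over the $n-\gamma$ vertices of $\overline S$ counts twice the edges inside $\langle\overline S\rangle$, so $e(\overline S)\le \tfrac{(n-\gamma)(\Delta-k)}{4}$; simultaneously, the number of edges between $S$ and $\overline S$ is at most $\sum_{v\in S}\delta(v)\le \Delta\gamma$, but also, since it equals $\sum_{v\in\overline S}\delta_S(v)=\sum_{v\in\overline S}(\delta(v)-\delta_{\overline S}(v))$, it is at least $(n-\gamma)(\delta_{\min}^{\overline S}) $-type quantities. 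The clean route is: $2m = \sum_{v\in S}\delta(v) + \sum_{v\in\overline S}\delta(v) \le \Delta\gamma + \sum_{v\in\overline S}\delta(v)$, and then control $\sum_{v\in\overline S}\delta(v) = \sum_{v\in\overline S}\delta_S(v) + 2e(\overline S)$. Here $\sum_{v\in\overline S}\delta_S(v)\le \sum_{v\in S}\delta_S(v)\le$ (number of edges incident to $S$) $\le \Delta\gamma - 2e(S)\le \Delta\gamma$, while $2e(\overline S)\le (n-\gamma)\cdot\tfrac{\Delta-k}{2}\cdot\tfrac{1}{1}$ — more precisely $2e(\overline S) = \sum_{v\in\overline S}\delta_{\overline S}(v)\le (n-\gamma)\tfrac{\Delta-k}{2}$ is too weak; instead one uses $\sum_{v\in \overline S}\delta(v)= \sum_{v\in\overline S}\delta_S(v)+\sum_{v\in\overline S}\delta_{\overline S}(v) \le \sum_{v\in\overline S}\delta_S(v) + \tfrac{1}{2}\sum_{v\in\overline S}(\delta(v)-k)$, i.e. $\tfrac{1}{2}\sum_{v\in\overline S}\delta(v) \le \sum_{v\in\overline S}\delta_S(v) - \tfrac{(n-\gamma)k}{2}$. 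Combining with $2m \le \Delta\gamma + \sum_{v\in\overline S}\delta(v)$ and the obvious $\sum_{v\in\overline S}\delta_S(v)\le \Delta\gamma$ produces a linear-in-$\gamma$ lower bound that, after also using $\sum_{v\in\overline S}\delta(v)\ge \delta(n-\gamma)$ where needed, must be massaged into the stated quadratic.

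The main obstacle — and the reason the bound is a square root rather than linear — is that a purely linear double count is lossy; one has to feed the cardinality $\gamma$ back into the estimate \emph{twice}, once through $\sum_{v\in S}\delta(v)\le\Delta\gamma$ and once through a term of order $\gamma^2$ coming from bounding edges. Concretely, I expect the sharp step to be: the number of edges with at least one endpoint in $\overline S$ is $2m - 2e(S) \ge 2m - 2\binom{\gamma}{2} = 2m - \gamma(\gamma-1)$ roughly (using $e(S)\le \binom{\gamma}{2}$ is too crude; rather $2e(S)+e(S,\overline S)=\sum_{v\in S}\delta(v)$, so $e(S,\overline S)=\sum_{v\in S}\delta(v)-2e(S)$), and on the other hand $\sum_{v\in\overline S}\delta(v) = e(S,\overline S)+2e(\overline S)$ with $2e(\overline S)\le\sum_{v\in\overline S}(\delta(v)-k-\delta_S(v)) $. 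The algebra should collapse to something of the form $2m+kn \le (n+2\Delta+k)\gamma - \gamma^2$, i.e. $\gamma^2 - (n+2\Delta+k)\gamma + (2m+kn)\le 0$; solving this quadratic inequality in $\gamma$ and taking the relevant (smaller-root) branch, then applying $\lceil\cdot\rceil$ since $\gamma$ is an integer, yields exactly
\[
\gamma_k^o(\Gamma)\ \ge\ \left\lceil\frac{(n+2\Delta+k) - \sqrt{(n+2\Delta+k)^2 - 4(2m+kn)}}{2}\right\rceil .
\]
The restriction $k\in\{2-\delta,\dots,\delta\}$ presumably enters to guarantee the discriminant is nonnegative and the quantity under the radical is meaningful (and that global offensive $k$-alliances exist), so I would check that hypothesis is used precisely at the point of extracting the square root.
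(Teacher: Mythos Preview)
Your target quadratic $\gamma^2 - (n+2\Delta+k)\gamma + (2m+kn)\le 0$ is exactly the one the paper derives, and the overall strategy (double-count edges, use the alliance inequality on $\overline S$, solve a quadratic) matches. What is missing is the one clean step that actually produces the $\gamma^2$ term: you circle around $e(S)\le\binom{\gamma}{2}$, $e(S,\overline S)\le\Delta\gamma$, and so on, but never land on the trivial bound $\delta_S(v)\le |S|$ for $v\in\overline S$, which is where the quadratic comes from.

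The paper's derivation is three lines. With $\gamma=|S|$,
\begin{align*}
2m &= \sum_{v\in S}\delta(v)+\sum_{v\in\overline S}\delta_S(v)+\sum_{v\in\overline S}\delta_{\overline S}(v)\\
   &\le 2\gamma\Delta + \sum_{v\in\overline S}\bigl(\delta_S(v)-k\bigr)\\
   &\le 2\gamma\Delta + (n-\gamma)(\gamma-k).
\end{align*}
The first inequality uses $\sum_{v\in S}\delta(v)\le\gamma\Delta$, $\sum_{v\in\overline S}\delta_S(v)=\sum_{v\in S}\delta_{\overline S}(v)\le\gamma\Delta$, and the alliance condition $\delta_{\overline S}(v)\le\delta_S(v)-k$ for $v\in\overline S$. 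The second inequality uses only $\delta_S(v)\le|S|=\gamma$. Expanding gives $\gamma^2-(n+2\Delta+k)\gamma+(2m+kn)\le 0$ and hence the stated lower bound. Your proposal never isolates that last step; once you see it, all of the detours through $e(\overline S)$ and $\binom{\gamma}{2}$ become unnecessary. The hypothesis $k\in\{2-\delta,\dots,\delta\}$ is not used in the algebra itself (the paper does not invoke it in the proof); it guarantees nontriviality of the objects involved rather than nonnegativity of the discriminant.
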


\begin{proof}
To get the  bound, we know that
\begin{align*}
2m &= \sum_{v\in S}\delta(v)+\sum_{v\in
\overline{S}}\delta_S(v)+\sum_{v\in
\overline{S}}\delta_{\overline{S}}(v) \\ &\leq 2|S|\Delta+\sum_{v\in
\overline{S}}(\delta_S(v)-k)\\ &\leq 2|S|\Delta+(n-|S|)(|S|-k).
\end{align*}
Then, the result follows by solving the inequality
$|S|^2-(n+2\Delta+k)|S|+2m+kn\leq 0$.
\end{proof}

Notice the  bound is tight, if we consider the cube $Q_3$   we
obtain $\gamma_{-1}^o(Q_3)=\gamma(Q_3)=2$ and
$\gamma_{2}^o(Q_3)=\gamma_{3}^o(Q_3)=4$.

The upper bound in the following theorem have been correctly
obtained in \cite{FeRoSi} but it appears in the article with a
mistake, should be $\left\lfloor\frac{\delta-k+2}{2}\right\rfloor$
instead of $\left\lceil\frac{\delta-k+2}{2}\right\rceil.$ So in this
article we include the correct result without proof. The lower bound
is an immediate generalization of the previous results obtained in
\cite{SiRo2} for $k=1$ and $k=2$.

\begin{Thm} \label{perdodo} Let $\Gamma$ be a graph of order $n$, size $m$ and maximum degree $\Delta$.
Then $$ \left\lceil\frac{2m+kn}{3\Delta+k}\right\rceil   \le
\gamma_{k}^{o}(\Gamma)  \le
n-\left\lfloor\frac{\delta-k+2}{2}\right\rfloor$$
\end{Thm}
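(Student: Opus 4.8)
The plan is to establish only the lower bound, since the upper bound is quoted from \cite{FeRoSi} without proof. Let $S$ be a global offensive $k$-alliance in $\Gamma$ with $|S|=\gamma_k^o(\Gamma)$. The idea is to count the edges of $\Gamma$ having at least one endpoint in $\overline{S}$, i.e. the edges incident with $\overline{S}$, in two different ways, and then to bound below the contribution coming from the alliance condition. Concretely, I would look at $\sum_{v\in\overline{S}}\delta(v)$, which splits as $\sum_{v\in\overline{S}}\delta_S(v)+\sum_{v\in\overline{S}}\delta_{\overline{S}}(v)$, where the second sum equals $2m(\overline{S})$, twice the number of edges inside $\langle\overline{S}\rangle$.

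First I would use the defining inequality of a global offensive $k$-alliance: every $v\in\overline{S}$ lies in $\partial(S)$ (because $S$ is dominating, so $\delta_S(v)>0$), hence $\delta_S(v)\ge\delta_{\overline{S}}(v)+k$, equivalently $\delta(v)\ge 2\delta_{\overline{S}}(v)+k$. Summing the latter over $v\in\overline{S}$ gives
\[
\sum_{v\in\overline{S}}\delta(v)\ \ge\ 2\sum_{v\in\overline{S}}\delta_{\overline{S}}(v)+k(n-|S|)\ =\ 4\,m(\overline{S})+k(n-|S|).
\]
On the other hand, the edges incident with $\overline{S}$ number at most $m$, and $\sum_{v\in\overline{S}}\delta(v)=2m(\overline{S})+e(S,\overline{S})$, so $\sum_{v\in\overline{S}}\delta(v)\le 2m(\overline{S})+e(S,\overline{S})$ with $m(\overline{S})+e(S,\overline{S})\le m$. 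Combining, I want to eliminate $m(\overline{S})$; from $\sum_{v\in\overline{S}}\delta(v)\ge 4m(\overline{S})+k(n-|S|)$ and $\sum_{v\in\overline{S}}\delta(v)=2m(\overline{S})+e(S,\overline{S})$ one gets $e(S,\overline{S})\ge 2m(\overline{S})+k(n-|S|)$, and then with $2m(\overline{S})\le 2m-2e(S,\overline{S})$ I obtain $3\,e(S,\overline{S})\ge 2m-\bigl(2m-2e(S,\overline{S})\bigr)+\cdots$ — more cleanly, since $e(S,\overline{S})\le \sum_{v\in S}\delta(v)\le\Delta|S|$ and the alliance condition forces $e(S,\overline{S})=\sum_{v\in\overline{S}}\delta_S(v)\ge\sum_{v\in\overline{S}}(\delta_{\overline{S}}(v)+k)$, I would instead bound $\sum_{v\in\overline{S}}\delta_{\overline{S}}(v)\ge \sum_{v\in\overline{S}}(\delta(v)-\Delta)\ge 2m-e(S,\overline{S})-\Delta(n-|S|)$... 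The cleanest route: from $2m=\sum_{v\in S}\delta(v)+e(S,\overline{S})+2m(\overline{S})$ and $e(S,\overline{S})=\sum_{v\in\overline{S}}\delta_S(v)\ge m(\overline{S})\cdot 0+\tfrac12\sum_{v\in\overline{S}}(\delta(v)+k)$ one isolates things in terms of $\Delta|S|$.

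The cleanest and most likely intended computation: write $2m\le \Delta|S|+e(S,\overline{S})+2m(\overline{S})$, and from the alliance inequality $\delta_S(v)\ge\delta_{\overline{S}}(v)+k$ summed over $\overline{S}$ get $e(S,\overline{S})\ge 2m(\overline{S})+k(n-|S|)$, hence $2m(\overline{S})\le e(S,\overline{S})-k(n-|S|)$; substituting, $2m\le\Delta|S|+2e(S,\overline{S})-k(n-|S|)\le\Delta|S|+2\Delta|S|-k(n-|S|)=3\Delta|S|+k|S|-kn$, using $e(S,\overline{S})\le\Delta|S|$. Solving $2m\le(3\Delta+k)|S|-kn$ for $|S|$ yields $|S|\ge\frac{2m+kn}{3\Delta+k}$, and since $|S|$ is an integer the ceiling follows. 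I expect the main obstacle to be choosing the right pair of inequalities so that the error terms collapse exactly to $(3\Delta+k)|S|-kn$; one must be careful that the hypothesis $k\le\delta$ (in fact here no sign restriction is needed beyond $3\Delta+k>0$, which holds for $k\ge 2-\Delta$) guarantees the denominator is positive so the division preserves the inequality direction, and that $e(S,\overline{S})\le\Delta|S|$ rather than some tighter bound is what makes the constants match. I would close by noting that for $k=1,2$ this recovers the bounds of \cite{SiRo2}, as already remarked.
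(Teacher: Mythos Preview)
Your final ``cleanest route'' is correct and is exactly the standard edge-counting argument: from $2m=\sum_{v\in S}\delta(v)+e(S,\overline{S})+2m(\overline{S})\le \Delta|S|+e(S,\overline{S})+2m(\overline{S})$, the alliance condition summed over $\overline{S}$ gives $e(S,\overline{S})\ge 2m(\overline{S})+k(n-|S|)$, and combining with $e(S,\overline{S})\le \Delta|S|$ yields $2m+kn\le (3\Delta+k)|S|$; since $k\ge 2-\Delta$ forces $3\Delta+k\ge 2\Delta+2>0$, division is legitimate and the ceiling follows. The earlier meandering paragraphs should simply be deleted.

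Note, however, that the paper does \emph{not} actually prove this theorem: it states that the upper bound is taken from \cite{FeRoSi} and that the lower bound ``is an immediate generalization of the previous results obtained in \cite{SiRo2} for $k=1$ and $k=2$'', and then moves on. So there is nothing in the paper to compare against directly. Your argument is precisely the generalization the authors have in mind, and it is the same edge-counting template they use in the proof of Theorem~\ref{CotasOfen} just above: there they start from the identical decomposition $2m=\sum_{v\in S}\delta(v)+\sum_{v\in\overline{S}}\delta_S(v)+\sum_{v\in\overline{S}}\delta_{\overline{S}}(v)$ and apply the alliance inequality, but bound the residual term by $\delta_S(v)\le |S|$ to get a quadratic in $|S|$, whereas here one bounds $e(S,\overline{S})\le \Delta|S|$ to get the linear inequality $(3\Delta+k)|S|\ge 2m+kn$.
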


%\begin{proof}
%If $S$ is a global offensive $k$-alliance in $\Gamma$, then
%\begin{equation}\label{Globall1}
%\sum_{v\in \bar{S}}\delta_{S}(v)\geq\sum_{v\in
%\bar{S}}\delta_{\bar{S}}(v)+k(n-|S|).
%\end{equation}
%Hence,
%\begin{align*}
%2m&=\sum_{v\in \bar{S}}\delta(v)+\sum_{v\in S}\delta(v)\\
%&=\sum_{v\in \bar{S}}\delta_{S}(v)+\sum_{v\in
%\bar{S}}\delta_{\bar{S}}(v)+\sum_{v\in S}\delta(v)\\
%& \le 2\sum_{v\in \bar{S}}\delta_{S}(v)+k(|S|-n)+\sum_{v\in S}\delta(v) \\
%& \le (3\Delta +k)|S|-nk.
%\end{align*}
%By solving $2m\le (3\Delta + k)|S|-kn$  we obtain the lower bound of
%$|S|$. For the proof of the upper bound we cite \cite{FeRoSi}.

%On the other hand, for every graph,
%\begin{equation}\label{claim1}
%2m=\sum_{v\in S}\delta _S(v)+2\sum_{v\in
%S}\delta_{\bar{S}}(v)+\sum_{v\in \bar{S}}\delta_{\bar{S}}(v).
%\end{equation}
%Moreover, %\label{alianzaofensivaglobal2}
%\begin{equation}\label{ll144}
%|S|\Delta \ge \sum_{v\in S}\delta _S(v).
%\end{equation}
%and
%\begin{equation}\label{ofensivaglobal0}
% |S|(n-|S|)\ge \sum_{v\in \bar{S}}\delta_{S}(v).
%\end{equation}
%From (\ref{Globall1}) and (\ref{ofensivaglobal0}) we obtain
%\begin{equation}\label{ofensivaGlobal12} (|S|-k)(n-|S|)\ge
%\sum_{v\in \bar{S}} \delta_{\bar{S}}(v).
%\end{equation}
%From the above inequalities and (\ref{ofensivaGlobal12})  we have
%\begin{equation}\label{e}
%(3|S|-k)(n-|S|)+\Delta|S|\ge 2m.
%\end{equation}
%Hence, the second bound follows.
%\end{proof}

The   bounds of Theorem \ref{perdodo} are tight. For instance, the
 lower bound is attained in the case of the 3-cube, $Q_3$, for
every $k$.
%In this case the lower bound bound  leads to
%$2\le\gamma_{-1}^{o}(Q_3)$ and $4\le
%\gamma_{0}^{o}(Q_3)=\gamma_{1}^{o}(Q_3)=\gamma_{2}^{o}(Q_3)=\gamma_{3}^{o}(Q_3)$.
The upper bound is attained, for instance, for the complete graph,
$K_{n}$, for every $k$, i.e., $\gamma_{k}^{o}(K_{n}) =
\left\lceil\frac{n+k-1}{2}\right\rceil.$

There are graphs in which Theorem \ref{CotasOfen} leads to better
results than the lower bound  in Theorem \ref{perdodo} and
viceversa. For instance, for $k=1$  and $\Gamma=K_5$ the bound in
Theorem \ref{CotasOfen} is attained but the lower bound in Theorem
\ref{perdodo} is not. The opposite one  occurs for the case of the
3-cube graph.

%$3$-cube, $Q_{3}$, for $k=2$, bound $(i)$ is better than bound
%$(ii)$, i.e., in this case Theorem \ref{perdodo}-$(i)$ leads to
%$\gamma_{2}^{o}(Q_{3})\ge 4$ and  Theorem \ref{perdodo}-$(ii)$ leads
%to $\gamma_{2}^{o}(Q_{3})\ge 3$. If $\Gamma$
 %is the complete graph of order $5$, $K_{5}$, for $k=1$ bound (ii) is better than (i), i.e., in the case Theorem \ref{perdodo}-$(ii)$
 %leads to
%$\gamma_{1}^{o}(K_{5})\ge 3$ and Theorem \ref{perdodo}-$(i)$ leads
%to $\gamma_{1}^{o}(K_{5})\ge 2$.

\begin{Cor}
Let ${\cal L}(\Gamma)$ be the line graph of a simple graph $\Gamma$
of size $m$. Let $\delta_{1}\geq \delta_{2}\geq \cdots
\geq\delta_{n}$ be the degree sequence of $\Gamma$. Then
$$\gamma_{k}^{o}({\cal L}(\Gamma))\ge
\left\lceil\frac{\displaystyle{\sum_{i=1}^{n}\delta_{i}^{2}}+m(k-2)}{3(\delta_{1}+\delta_{2}-2)+k}\right\rceil.$$
\end{Cor}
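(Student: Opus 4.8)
The plan is to apply Theorem \ref{perdodo} to the line graph ${\cal L}(\Gamma)$ and then translate the quantities appearing there — the order, the size, and the maximum degree of ${\cal L}(\Gamma)$ — into invariants of the original graph $\Gamma$. Recall the standard facts about line graphs: if $\Gamma$ has $n$ vertices of degrees $\delta_1\ge\cdots\ge\delta_n$ and size $m$, then ${\cal L}(\Gamma)$ has exactly $m$ vertices; an edge $e=uv$ of $\Gamma$ becomes a vertex of ${\cal L}(\Gamma)$ whose degree is $\delta(u)+\delta(v)-2$; and the number of edges of ${\cal L}(\Gamma)$ is $\sum_{i=1}^n \binom{\delta_i}{2}=\frac12\sum_{i=1}^n\delta_i^2-m$, since each vertex $v$ of $\Gamma$ contributes $\binom{\delta(v)}{2}$ edges to ${\cal L}(\Gamma)$ (all pairs of edges incident with $v$).

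First I would write $M$ for the size of ${\cal L}(\Gamma)$ and $\Delta'$ for its maximum degree, so $M=\frac12\sum_{i=1}^n\delta_i^2-m$ and $\Delta'=\delta_1+\delta_2-2$ (the maximum of $\delta(u)+\delta(v)-2$ over edges $uv$ is attained, as one may assume, by an edge joining the two highest-degree vertices; this needs the mild observation that in a graph the two vertices of largest degree are adjacent or at least that the bound $\delta(u)+\delta(v)-2\le \delta_1+\delta_2-2$ holds for every edge, which is immediate). Then the lower bound in Theorem \ref{perdodo}, applied to ${\cal L}(\Gamma)$ with its order $m$ playing the role of "$n$", gives
$$
\gamma_k^o({\cal L}(\Gamma))\ \ge\ \left\lceil\frac{2M+km}{3\Delta'+k}\right\rceil.
$$
Substituting $2M=\sum_{i=1}^n\delta_i^2-2m$ and $3\Delta'=3(\delta_1+\delta_2-2)$ into the numerator and denominator yields
$$
\gamma_k^o({\cal L}(\Gamma))\ \ge\ \left\lceil\frac{\sum_{i=1}^n\delta_i^2-2m+km}{3(\delta_1+\delta_2-2)+k}\right\rceil
=\left\lceil\frac{\sum_{i=1}^n\delta_i^2+m(k-2)}{3(\delta_1+\delta_2-2)+k}\right\rceil,
$$
which is exactly the claimed inequality.

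The only genuine point requiring care — and hence the main obstacle, though a minor one — is verifying that the hypotheses of Theorem \ref{perdodo} are legitimately in force for ${\cal L}(\Gamma)$, namely that $k$ lies in the admissible range for the line graph (so that $\gamma_k^o$ is defined) and that ${\cal L}(\Gamma)$ has no isolated vertices where needed; this is implicit in the statement, so I would simply note that the bound is read off under the standing assumption that $\gamma_k^o({\cal L}(\Gamma))$ is meaningful. One should also double-check the edge-count identity $|E({\cal L}(\Gamma))|=\frac12\sum_i\delta_i^2-m$, which follows by summing $\binom{\delta_i}{2}$ over all vertices and using $\sum_i\delta_i=2m$. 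Everything else is the routine algebraic substitution carried out above, so the proof is short: invoke Theorem \ref{perdodo} for ${\cal L}(\Gamma)$, plug in the three line-graph parameters, and simplify.
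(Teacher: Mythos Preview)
Your approach is correct and is exactly what the paper intends: the corollary is stated without proof immediately after Theorem~\ref{perdodo}, and substituting the line-graph parameters (order $m$, size $\frac{1}{2}\sum_i\delta_i^2-m$, maximum degree at most $\delta_1+\delta_2-2$) into that lower bound is the entire argument. The only exposition to tighten is that $\Delta({\cal L}(\Gamma))$ need not \emph{equal} $\delta_1+\delta_2-2$ (the two top-degree vertices may not be adjacent), but since $\Delta({\cal L}(\Gamma))\le\delta_1+\delta_2-2$ always holds, enlarging the denominator only weakens the lower bound, so the stated inequality still follows.
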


%\begin{Thm} Para todo grafo conexo $\Gamma$ de orden $n$ se verifica
%$\gamma_k^o(\Gamma)\leq\left\lfloor\frac{n+\gamma_k(\Gamma)}{2}\right\rfloor$.
%\end{Thm}

%\begin{Thm} Para todo grafo conexo $\Gamma$ de orden $n$, grado m\'inimo $\delta$ y grado m\'aximo $\Delta$, se verifica
%$$
%\gamma_k^o(\Gamma)\geq \left\{\begin{array}{c}
%\vspace*{0.3cm}\left\lceil\frac{n(\delta+k)}{2\Delta+\delta+k}\right\rceil \quad\,\,\,\mbox{si}\quad \delta+k \quad \mbox{es par}\quad\\
%\left\lceil\frac{n(\delta+k+1)}{2\Delta+\delta+k+1}\right\rceil \quad \mbox{si}\quad \delta+k \quad \mbox{es impar} \\
%                              \end{array}\right.
%$$
%\end{Thm}

\begin{Thm} Let $\Gamma$ be a graph of order $n$ and maximum degree $\Delta$. For all global offensive
$k$-alliance $S$ in $\Gamma$ such that the subgraph $\langle
\overline{S}\rangle$ has minimum degree $p$,
$|S|\geq\left\lceil\frac{(p+k)n}{\Delta+p+k}\right\rceil$.
\end{Thm}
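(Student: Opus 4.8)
The plan is to count edges between $S$ and $\overline{S}$ in two ways and combine this with the alliance condition and the degree bound on $\langle\overline{S}\rangle$. Write $\overline{S}$ for the complement of $S$ and let $n_1=|S|$, $n_2=|\overline{S}|=n-n_1$. First I would bound the number of edges leaving $S$ from above: each vertex of $S$ has degree at most $\Delta$, so the number of edges with at least one endpoint in $\overline{S}$ that are counted from the $S$-side is at most $n_1\Delta$; more precisely, $\sum_{v\in\overline{S}}\delta_S(v)\le n_1\Delta$ since each such edge contributes to the degree of some vertex of $S$.

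Next I would bound $\sum_{v\in\overline{S}}\delta_S(v)$ from below. Since $S$ is a global offensive $k$-alliance, $\delta_S(v)\ge\delta_{\overline{S}}(v)+k$ for every $v\in\overline{S}$ (here we use that $\overline{S}\subseteq\partial(S)$ because $S$ is dominating, so the inequality holds at every vertex of $\overline{S}$). Moreover $\langle\overline{S}\rangle$ has minimum degree $p$, so $\delta_{\overline{S}}(v)\ge p$ for all $v\in\overline{S}$. Hence $\delta_S(v)\ge p+k$ for every $v\in\overline{S}$, and summing over the $n_2=n-n_1$ vertices of $\overline{S}$ gives $\sum_{v\in\overline{S}}\delta_S(v)\ge (p+k)(n-n_1)$.

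Combining the two estimates yields $(p+k)(n-n_1)\le n_1\Delta$, i.e. $(p+k)n\le n_1(\Delta+p+k)$. Assuming $\Delta+p+k>0$ (which holds in the relevant range, since $p\ge 0$ and $k\ge 2-\Delta$ forces $\Delta+p+k\ge 2$ except in degenerate cases, and $p+k\ge 0$ for the bound to be meaningful), dividing gives $n_1\ge\frac{(p+k)n}{\Delta+p+k}$, and since $n_1$ is an integer we may take the ceiling, obtaining $|S|\ge\left\lceil\frac{(p+k)n}{\Delta+p+k}\right\rceil$.

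The only delicate point is justifying that the alliance inequality $\delta_S(v)\ge\delta_{\overline{S}}(v)+k$ is available at \emph{every} vertex of $\overline{S}$ rather than just those in $\partial(S)$: this is exactly where globality (the dominating-set property) is used, since a dominating set $S$ has $\overline{S}\subseteq\partial(S)$. Everything else is a two-way edge count, so I expect no real obstacle beyond keeping track of the sign condition $\Delta+p+k>0$ needed to divide.
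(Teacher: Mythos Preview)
Your proof is correct and follows essentially the same approach as the paper: both combine the upper bound $\sum_{v\in\overline{S}}\delta_S(v)\le \Delta|S|$ with the lower bound coming from the alliance condition and the minimum degree $p$ of $\langle\overline{S}\rangle$. The only cosmetic difference is that the paper routes the lower bound through $\sum_{v\in\overline{S}}\delta_{\overline{S}}(v)=2m(\langle\overline{S}\rangle)\ge p(n-|S|)$ before adding $k(n-|S|)$, whereas you apply $\delta_{\overline{S}}(v)\ge p$ pointwise; the resulting inequality $(p+k)(n-|S|)\le \Delta|S|$ is identical.
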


\begin{proof} The number of edges in the subgraph $\langle
\overline{S}\rangle$ satisfies $m(\langle
\overline{S}\rangle)\geq\frac{(n-|S|)p}{2}$, hence,
$$
\Delta|S|\!\geq\!\sum_{v\in
\overline{S}}\delta_S(v)\!\geq\!\sum_{v\in
\overline{S}}\delta_{\overline{S}}(v)+k(n-|S|)\!=\!2m(\langle
\overline{S}\rangle)+k(n-|S|)\!\geq\!(p+k)(n-|S|),
$$
in consequence, $|S|\geq\frac{(p+k)n}{\Delta+p+k}$.
\end{proof}

\vspace{0.2cm}

 Notice the bound is attained for the minimal
global offensive $k$-alliance in the case of the 3-cube graph $Q_3$
for $k = -1, 2, 3.$ For $k =-1$ we have $|S| = 2$ and $p = 2$, and
for $k = 2, 3$ we have $|S| = 4$ and $p = 0$.

\end{document}